\newcommand{\A}{\mathscr{A}}
\newcommand{\B}{\mathscr{B}}
\newcommand{\T}{\mathscr{T}}
\renewcommand{\S}{\mathscr{S}}
\newcommand{\E}{\mathscr{E}}
\newcommand{\X}{\mathscr{X}}
\newcommand{\I}{\mathscr{I}}
\renewcommand{\j}{\jmath}
\newcommand{\s}{\mathtt{s}}
\newcommand{\e}{\mathtt{e}}
\newcommand{\x}{\mathtt{x}}
\newcommand{\y}{\mathtt{y}}
\newcommand{\z}{\mathtt{z}}
\newcommand{\bz}{\bs{z}}
\renewcommand{\r}{\bs{r}}
\renewcommand{\path}{\mathrel{\leadsto}}
\newcommand{\ba}{\begin{array}}
\newcommand{\ea}{\end{array}}
\newcommand{\bs}{\boldsymbol}
\newcommand{\del}{\rotatebox[origin=c]{180}{$\setminus$}}
\newcommand{\con}{\reflectbox{$\setminus\!\!\!\setminus$}}
\newcommand*{\Scale}[2][4]{\scalebox{#1}{$#2$}}
\theoremstyle{thmstyleone}%
\newtheorem{theorem}{Theorem}
\theoremstyle{thmstyletwo}%
\theoremstyle{thmstylethree}%
\newtheorem{definition}{Definition}%
\begin{document}

\title[Coplanarity of rooted spanning-tree vectors]{Coplanarity of rooted spanning-tree vectors}


\author*[1]{\fnm{Matteo} \sur{Polettini}}\email{matteoeo@gmail.com}

\author[2]{\fnm{Pedro E.} \sur{Harunari}}

\author[3]{\fnm{Sara} \sur{Dal Cengio}}

\author[3]{\fnm{Vivien} \sur{Lecomte}}

\affil[1] {Via Gaspare Nadi 4, 40139 Bologna, Italy}

\affil[2]{Department of Physics and Materials Science, University of Luxembourg, Campus Limpertsberg, 162a avenue de la Fa\"iencerie, L-1511 Luxembourg (G. D. Luxembourg)} 

\affil[3]{Université Grenoble Alpes, CNRS, LIPhy, FR-38000 Grenoble, France}


\abstract{Employing a recent technology of tree surgery we prove a ``deletion-constriction'' formula for products of rooted spanning trees on weighted directed graphs that generalizes deletion-contraction on undirected graphs. The formula implies that, letting $\tau_\x^\varnothing$, $\tau_\x^+$, and $\tau_\x^-$ be the rooted spanning tree polynomials obtained respectively by removing an edge in both directions or by forcing the tree to pass through either direction of that edge, the vectors $(\tau_\x^\varnothing, \tau_\x^+, \tau_\x^-)$ are coplanar for all roots $\x$. We deploy the result to give an alternative derivation of a recently found mutual linearity of stationary currents of Markov chains. We generalize deletion-constriction and current linearity among two edges, and conjecture that similar results may hold for arbitrary subsets of edges.}

\keywords{Weighted directed graphs, rooted spanning trees, deletion-contraction, Markov chains}



\maketitle

\newpage


\section{Introduction}

Spanning trees and the properties of their associated polynomials are ubiquitous in physics, from electrical circuit theory~\cite{kirby2016kirchhoff,freitas2020stochastic} to equilibrium statistical mechanics~\cite{sokal2005multivariate,de2013exactly} and quantum field theory~\cite{caracciolo2004fermionic,gurau2009tree,aluffi2011feynman}. In the context of the statistical mechanics of irreversible systems based on Markov processes, undirected unweighted spanning trees can be used to define basis of observables that characterize stationary and transient behavior~\cite{polettini2015cycle}; spanning trees of (unweighted) digraphs allow one to derive the statistical large deviation rate function of the currents~\cite{carugno2022graph,polettini2015best}; and more importantly for what follows, spanning trees of weighted directed graphs provide the stationary probability distribution~\cite{hill1966studies,schnakenberg1976network,caplan1982tl,king1956schematic} (see also~\cite{khodabandehlou2022trees,lecturenotes} for recent reviews and~\cite{martins2023topologically, khodabandehlou2022exact, garilli2023fluctuation, liang2023universal, floydLearningControlNonequilibrium2024a, chunTradeoffsNumberFluctuations2023, maesHeatBoundsBlowtorch2013} for recent applications). In this manuscript we will be concerned with this latter class of graphs, with the edges connecting vertices carrying different weights in either direction, which may represent transition rates of a Markov process among vertices.

For undirected graphs (same weight in both directions), the deletion-contraction formula~\cite{brylawski1992tutte,bollobas2000contraction} allows one to express the spanning-tree polynomial of a graph with those of smaller graphs obtained by removing edges or by shrinking them down to a vertex. An analogy can be made to electrical resistor networks where a resistance is either put to $0$ or to $+\infty$~\cite{cinkir2011deletion}. For directed graphs, spanning tree polynomials need to be rooted to account for the directionality of the weights, and such ``topological'' contraction becomes contrived. In the electronic network analogy, this case is similar to the inclusion of diodes, which conduct electricity preferentially in one direction~\cite{freitas2021stochastic}, and it does not make sense to short-circuit the extremal vertices.

Our first, smaller, contribution is to prove that properly constrained rooted spanning-tree polynomials for weighted directed graphs (as per Definition~\ref{th:rsp}) satisfy a generalized ``logical'' acceptation of deletion-contraction compatible with (but less powerful than) the above acceptation (Theorem~\ref{th:gendelcon}). We call this formula deletion-constriction.

Our second and most important contribution is a ``second order'' deletion-constriction formula involving products of rooted spanning tree polynomials (Theorem~\ref{th:second}). The formula is graphically represented in Fig.\,\ref{fig:treesurgery} and explained in its caption. To prove it, we employ the technology of tree surgery introduced in Ref.\,\cite[App. D]{owen2020universal} in an analysis of nonequilibrium response for Markov processes.

A suggestive corollary of the formula is that
certain 3-vectors representing deletion-constriction through either directions of one edge span a $2$-dimensional space independently of the topology of the graph and the number of vertices (Theorem~\ref{th:coplanarity}, see Fig.\,\ref{fig:coplanarity}). Before proving the main results this latter fact is shown in the next section by a simple example.

As a third contribution we use this formulation to rederive a recent result by the Authors~\cite{harunari2024mutual} about mutual linearity of stationary mean currents of continuous-time Markov chains.
We then expand on these ideas, showing that certain $9$-vectors representing the deletion-constriction of $2$ reversible edges span a $3$-dimensional space, and use this fact to suggest the validity of mutual linearity among three Markov currents, under technical conditions that remain to be lifted.

Finally, we conjecture that the $3^n$-vectors representing the deletion-constriction through $n$ reversible edges span a $n+1$ dimensional space.

Below, the scalar product $\cdot$ is the standard Euclidean.

\subsection{Example of coplanarity}
\label{sec:example}

Let us first give a very simple analytical example of coplanarity. Consider the graph
\begin{align}
\begin{array}{c}\xymatrix{ \mathtt{a}  \ar@{-}[r]^{\star} & \mathtt{b} \ar@{-}[d] \\ 
\mathtt{d} \ar@{-}[ur] \ar@{-}[u] & \mathtt{c} \ar@{-}[l]} \end{array} \label{eq:graphgraph}
\end{align}
where $\star$ is the pinned edge that will be subjected to deletion-constriction. The rooted spanning-tree polynomials are, in words (see below for mathematical definitions), the sum over all sub-graphs where exactly one path connects any vertex to a fixed root vertex, of the product of the rates along the directed edges of the sub-graph. In the present example, they are given by the diagrammatic representations:
\begin{subequations}
\begin{align}
\tau_\mathtt{a}
& = \Scale[0.6]{
  \ba{c}\xymatrix{ \ar@{<-}[r] &  \ar@{<-}[d] \\  \ar[u] & } \ea
+ \ba{c}\xymatrix{ \ar@{<-}[r] & \ar@{<-}[d] \\ & \ar@{<-}[l]} \ea
+ \ba{c}\xymatrix{ \ar@{<-}[r] &  \\ \ar[u] & \ar[l]} \ea
+ \ba{c}\xymatrix{ \ar@{<-}[r]  & \\ \ar[ur]  & \ar[l]} \ea
+ \ba{c}\xymatrix{ \ar@{<-}[r]  & \ar@{<-}[d] \\ \ar[ur] & } \ea 
+ \ba{c}\xymatrix{ & \ar[d] \\ \ar[u] & \ar[l]} \ea 
+ \ba{c}\xymatrix{ & \ar@{<-}[d] \\ \ar@{<-}[ur] \ar[u] & } \ea
+ \ba{c}\xymatrix{   & \\ \ar@{<-}[ur] \ar[u] & \ar[l]} \ea
} \\
\tau_\mathtt{b}
& = \Scale[0.6]{\ba{c}\xymatrix{ \ar@{->}[r] &  \ar@{<-}[d] \\  \ar[u] & } \ea
+ \ba{c}\xymatrix{ \ar@{->}[r] & \ar@{<-}[d] \\ & \ar@{<-}[l]} \ea
+ \ba{c}\xymatrix{ \ar@{->}[r] &  \\ \ar[u] & \ar[l]} \ea 
+ \ba{c}\xymatrix{ \ar@{->}[r]  & \\ \ar[ur]  & \ar[l]} \ea
+ \ba{c}\xymatrix{ \ar@{->}[r]  & \ar@{<-}[d] \\ \ar[ur] & } \ea
+ \ba{c}\xymatrix{ & \ar@{<-}[d] \\ \ar@{<-}[u] & \ar@{<-}[l]} \ea
+ \ba{c}\xymatrix{ & \ar@{<-}[d] \\ \ar@{->}[ur] \ar@{<-}[u] & } \ea
+ \ba{c}\xymatrix{   & \\ \ar@{->}[ur] \ar@{<-}[u] & \ar[l]} \ea
} \\
\tau_\mathtt{c}
& = \Scale[0.6]{\ba{c}\xymatrix{ \ar@{->}[r] &  \ar@{->}[d] \\  \ar[u] & } \ea
+ \ba{c}\xymatrix{ \ar@{->}[r] & \ar@{->}[d] \\ & \ar@{<-}[l]} \ea
+ \ba{c}\xymatrix{ \ar@{<-}[r] &  \\ \ar@{<-}[u] & \ar@{<-}[l]} \ea 
+ \ba{c}\xymatrix{ \ar@{->}[r]  & \\ \ar@{<-}[ur]  & \ar@{<-}[l]} \ea
+ \ba{c}\xymatrix{ \ar@{->}[r]  & \ar@{->}[d] \\ \ar[ur] & } \ea
+ \ba{c}\xymatrix{ & \ar@{->}[d] \\ \ar@{<-}[u] & \ar@{<-}[l]} \ea
+ \ba{c}\xymatrix{ & \ar@{->}[d] \\ \ar@{->}[ur] \ar@{<-}[u] & } \ea
+ \ba{c}\xymatrix{   & \\ \ar@{<-}[ur] \ar@{<-}[u] & \ar@{<-}[l]} \ea
} \\
\tau_\mathtt{d}
& = \Scale[0.6]{\ba{c}\xymatrix{ \ar@{<-}[r] &  \ar@{<-}[d] \\  \ar@{<-}[u] & } \ea
+ \ba{c}\xymatrix{ \ar@{->}[r] & \ar@{->}[d] \\ & \ar@{->}[l]} \ea
+ \ba{c}\xymatrix{ \ar@{<-}[r] &  \\ \ar@{<-}[u] & \ar[l]} \ea 
+ \ba{c}\xymatrix{ \ar@{->}[r]  & \\ \ar@{<-}[ur]  & \ar[l]} \ea
+ \ba{c}\xymatrix{ \ar@{->}[r]  & \ar@{<-}[d] \\ \ar@{<-}[ur] & } \ea
+ \ba{c}\xymatrix{ & \ar@{->}[d] \\ \ar@{<-}[u] & \ar@{->}[l]} \ea
+ \ba{c}\xymatrix{ & \ar@{<-}[d] \\ \ar@{<-}[ur] \ar@{<-}[u] & } \ea
+ \ba{c}\xymatrix{   & \\ \ar@{<-}[ur] \ar@{<-}[u] & \ar[l]} \ea
}
\end{align}
\end{subequations}

For each rooted spanning-tree polynomial, we collect the terms according to the conditions: those that do not contain the pinned edge; those that contain $\mathtt{a}\gets \mathtt{b}$; those that contain $ \mathtt{a} \to \mathtt{b}$. We collect the sub-polynomials so obtained in $3$-vectors. Using the diagrammatic representation, we obtain:
\begin{subequations}
\begin{align}
\boldsymbol{\tau}_\mathtt{a}
& = \left(\ba{l} 
\Scale[0.6]{
\ba{c}\xymatrix{ & \ar[d] \\ \ar[u] & \ar[l]} \ea 
+ \ba{c}\xymatrix{ & \ar@{<-}[d] \\ \ar@{<-}[ur] \ar[u] & } \ea
+ \ba{c}\xymatrix{   & \\ \ar@{<-}[ur] \ar[u] & \ar[l]} \ea
} \\
\Scale[0.6]{
  \ba{c}\xymatrix{  \ar@{<-}[r] &  \ar@{<-}[d] \\  \ar[u] & } \ea
+ \ba{c}\xymatrix{  \ar@{<-}[r] & \ar@{<-}[d] \\ & \ar@{<-}[l]} \ea
+ \ba{c}\xymatrix{  \ar@{<-}[r] &  \\ \ar[u] & \ar[l]} \ea
+ \ba{c}\xymatrix{  \ar@{<-}[r] & \\ \ar[ur]  & \ar[l]} \ea
+ \ba{c}\xymatrix{  \ar@{<-}[r] & \ar@{<-}[d] \\ \ar[ur] & } \ea} \\
0 \ea \right)
\\
\boldsymbol{\tau}_\mathtt{b}
& = \left(\ba{l} 
\Scale[0.6]{
\ba{c}\xymatrix{ & \ar@{<-}[d] \\ \ar@{<-}[u] & \ar@{<-}[l]} \ea
+ \ba{c}\xymatrix{ & \ar@{<-}[d] \\ \ar@{->}[ur] \ar@{<-}[u] & } \ea
+ \ba{c}\xymatrix{   & \\ \ar@{->}[ur] \ar@{<-}[u] & \ar[l]} \ea
}
\\
0
\\
\Scale[0.6]{
\ba{c}\xymatrix{ \ar@{->}[r]  &  \ar@{<-}[d] \\  \ar[u] & } \ea
+ \ba{c}\xymatrix{ \ar@{->}[r] & \ar@{<-}[d] \\ & \ar@{<-}[l]} \ea
+ \ba{c}\xymatrix{ \ar@{->}[r] &  \\ \ar[u] & \ar[l]} \ea 
+ \ba{c}\xymatrix{ \ar@{->}[r] & \\ \ar[ur]  & \ar[l]} \ea
+ \ba{c}\xymatrix{ \ar@{->}[r] & \ar@{<-}[d] \\ \ar[ur] & } \ea
} 
 \ea \right)
\\
\boldsymbol{\tau}_\mathtt{c}
& = \left(\ba{l} 
\Scale[0.6]{
\ba{c}\xymatrix{ & \ar@{->}[d] \\ \ar@{<-}[u] & \ar@{<-}[l]} \ea
+ \ba{c}\xymatrix{ & \ar@{->}[d] \\ \ar@{->}[ur] \ar@{<-}[u] & } \ea
+ \ba{c}\xymatrix{   & \\ \ar@{<-}[ur] \ar@{<-}[u] & \ar@{<-}[l]} \ea
}
\\
\Scale[0.6]{
\ba{c}\xymatrix{\ar@{<-}[r]  &  \\ \ar@{<-}[u] & \ar@{<-}[l]} \ea 
}
\\
\Scale[0.6]{\ba{c}\xymatrix{\ar@{->}[r] &  \ar@{->}[d] \\  \ar[u] & } \ea
+ \ba{c}\xymatrix{\ar@{->}[r] & \ar@{->}[d] \\ & \ar@{<-}[l]} \ea
+ \ba{c}\xymatrix{\ar@{->}[r]  & \\ \ar@{<-}[ur]  & \ar@{<-}[l]} \ea
+ \ba{c}\xymatrix{\ar@{->}[r]  & \ar@{->}[d] \\ \ar[ur] & } \ea
}
 \ea \right)
 \\
 \boldsymbol{\tau}_\mathtt{d}
& = \left(\ba{l} 
\Scale[0.6]{
\ba{c}\xymatrix{ & \ar@{->}[d] \\ \ar@{<-}[u] & \ar@{->}[l]} \ea
+ \ba{c}\xymatrix{ & \ar@{<-}[d] \\ \ar@{<-}[ur] \ar@{<-}[u] & } \ea
+ \ba{c}\xymatrix{   & \\ \ar@{<-}[ur] \ar@{<-}[u] & \ar[l]} \ea
}
\\
\Scale[0.6]{
\ba{c}\xymatrix{\ar@{<-}[r]  &  \ar@{<-}[d] \\  \ar@{<-}[u] & } \ea
+ \ba{c}\xymatrix{\ar@{<-}[r]  &  \\ \ar@{<-}[u] & \ar[l]} \ea 
}
\\
\Scale[0.6]{
\ba{c}\xymatrix{\ar@{->}[r] & \ar@{->}[d] \\ & \ar@{->}[l]} \ea
+ \ba{c}\xymatrix{\ar@{->}[r] & \\ \ar@{<-}[ur]  & \ar[l]} \ea
+ \ba{c}\xymatrix{\ar@{->}[r] & \ar@{<-}[d] \\ \ar@{<-}[ur] & } \ea
}
 \ea \right).
\end{align}
\end{subequations}
These vectors appear to be quite arbitrary. Obviously, given that there are four vectors spanning a three-dimensional space, at least one of them is not linearly independent of the others. In fact, two of them are not: all such vectors are coplanar. This can be shown by defining the vector
\begin{align}
\bs{\sigma}_{\mathtt{a} \leftrightarrow \mathtt{b}} := \left(\ba{l}
\Scale[0.6]{
-  \ba{c}\xymatrix{ \ar@/^.3pc/@{->}[r] \ar@/_.3pc/@{<-}[r] &  \ar@{<-}[d] \\  \ar[u] & } \ea
- \ba{c}\xymatrix{ \ar@/^.3pc/@{->}[r] \ar@/_.3pc/@{<-}[r] & \ar@{<-}[d] \\ & \ar@{<-}[l]} \ea
- \ba{c}\xymatrix{ \ar@/^.3pc/@{->}[r] \ar@/_.3pc/@{<-}[r] &  \\ \ar[u] & \ar[l]} \ea
- \ba{c}\xymatrix{ \ar@/^.3pc/@{->}[r] \ar@/_.3pc/@{<-}[r] & \\ \ar[ur]  & \ar[l]} \ea
- \ba{c}\xymatrix{ \ar@/^.3pc/@{->}[r] \ar@/_.3pc/@{<-}[r] & \ar@{<-}[d] \\ \ar[ur] & } \ea 
} 
\\
\Scale[0.6]{\ba{c}\xymatrix{\ar@{->}[r] & \ar[d] \\ \ar[u] & \ar[l]} \ea 
+ \ba{c}\xymatrix{\ar@{->}[r] & \ar@{<-}[d] \\ \ar@{<-}[ur] \ar[u] & } \ea
+ \ba{c}\xymatrix{\ar@{->}[r]   & \\ \ar@{<-}[ur] \ar[u] & \ar[l]} \ea
}
\\ 
\Scale[0.6]{\ba{c}\xymatrix{\ar@{<-}[r] & \ar@{<-}[d] \\ \ar@{<-}[u] & \ar@{<-}[l]} \ea
+ \ba{c}\xymatrix{\ar@{<-}[r] & \ar@{<-}[d] \\ \ar@{->}[ur] \ar@{<-}[u] & } \ea
+ \ba{c}\xymatrix{\ar@{<-}[r]   & \\ \ar@{->}[ur] \ar@{<-}[u] & \ar[l]} \ea
}
\ea
\right)
\end{align}
and showing by a tedious hand calculation that $\bs{\sigma}_{\mathtt{a} \leftrightarrow \mathtt{b}} \cdot \bs{\tau}_\x = 0$, $\forall \x \in \{\mathtt{a}, \mathtt{b}, \mathtt{c}, \mathtt{d}\}$. 

A useful variant of this result is obtained when we further constrain spanning trees to pass through or not pass through a given edge. Let us for example consider the subset of spanning trees that pass through the oriented edge $\mathtt{c} \to \mathtt{b}$. We now obtain the new vectors
\begin{subequations}
\begin{align}
\boldsymbol{\tau}^{\con \mathtt{c} \to \mathtt{b}}_\mathtt{a}
& = \left(\ba{l} 
\Scale[0.6]{
\ba{c}\xymatrix{ & \ar@{<-}[d] \\ \ar@{<-}[ur] \ar[u] & } \ea
} \\
\Scale[0.6]{
  \ba{c}\xymatrix{ \ar@{<-}[r] &  \ar@{<-}[d] \\  \ar[u] & } \ea
+ \ba{c}\xymatrix{ \ar@{<-}[r] & \ar@{<-}[d] \\ & \ar@{<-}[l]} \ea
+ \ba{c}\xymatrix{ \ar@{<-}[r] & \ar@{<-}[d] \\ \ar[ur] & } \ea} \\
0 \ea \right)
\\
\boldsymbol{\tau}_\mathtt{b}^{\con \mathtt{c} \to \mathtt{b}}
& = \left(\ba{l} 
\Scale[0.6]{
\ba{c}\xymatrix{ & \ar@{<-}[d] \\ \ar@{<-}[u] & \ar@{<-}[l]} \ea
+ \ba{c}\xymatrix{ & \ar@{<-}[d] \\ \ar@{->}[ur] \ar@{<-}[u] & } \ea
}
\\
0
\\
\Scale[0.6]{
\ba{c}\xymatrix{ \ar@{->}[r] &  \ar@{<-}[d] \\  \ar[u] & } \ea
+ \ba{c}\xymatrix{ \ar@{->}[r] & \ar@{<-}[d] \\ & \ar@{<-}[l]} \ea
+ \ba{c}\xymatrix{ \ar@{->}[r] & \ar@{<-}[d] \\ \ar[ur] & } \ea
} 
 \ea \right)
 \\
 \bs{\tau}^{\con \mathtt{c} \to \mathtt{b}}_\mathtt{c} & = \left(\ba{c} 0 \\ 0 \\ 0 \ea \right)
\\ 
 \boldsymbol{\tau}_\mathtt{d}^{\con \mathtt{c} \to \mathtt{b}}
& = \left(\ba{l} 
\Scale[0.6]{
\ba{c}\xymatrix{ & \ar@{<-}[d] \\ \ar@{<-}[ur] \ar@{<-}[u] & } \ea
}
\\
\Scale[0.6]{
\ba{c}\xymatrix{ \ar@{<-}[r] &  \ar@{<-}[d] \\  \ar@{<-}[u] & } \ea
}
\\
\Scale[0.6]{
\ba{c}\xymatrix{\ar@{->}[r] & \ar@{<-}[d] \\ \ar@{<-}[ur] & } \ea
}
 \ea \right).
\end{align}
\end{subequations}
Similarly, we find a vector that is orthogonal to all three of them starting from $\bs{\sigma}_{\mathtt{a} \leftrightarrow \mathtt{b}}$ and removing all entries that do not pass through $\mathtt{c} \to \mathtt{b}$, yielding:
\begin{align}
\bs{\sigma}_{\mathtt{a} \leftrightarrow \mathtt{b}}^{\con \mathtt{c} \to \mathtt{b}} = \left(\ba{l}
\Scale[0.6]{
-  \ba{c}\xymatrix{ \ar@/^.3pc/@{->}[r] \ar@/_.3pc/@{<-}[r] &  \ar@{<-}[d] \\  \ar[u] & } \ea
- \ba{c}\xymatrix{ \ar@/^.3pc/@{->}[r] \ar@/_.3pc/@{<-}[r] & \ar@{<-}[d] \\ & \ar@{<-}[l]} \ea
- \ba{c}\xymatrix{ \ar@/^.3pc/@{->}[r] \ar@/_.3pc/@{<-}[r] & \ar@{<-}[d] \\ \ar[ur] & } \ea 
} 
\\
\Scale[0.6]{
\ba{c}\xymatrix{  \ar@{->}[r]  & \ar@{<-}[d] \\ \ar@{<-}[ur] \ar[u] & } \ea
}
\\ 
\Scale[0.6]{\ba{c}\xymatrix{ \ar@{<-}[r] & \ar@{<-}[d] \\ \ar@{<-}[u] & \ar@{<-}[l]} \ea
+ \ba{c}\xymatrix{ \ar@{<-}[r] & \ar@{<-}[d] \\ \ar@{->}[ur] \ar@{<-}[u] & } \ea
}
\ea
\right) .
\end{align}
Once again it can be checked by hand calculation that this latter vector is orthogonal to the ones above.

\begin{figure}
\begin{center}
  \includegraphics[width=.8\linewidth]{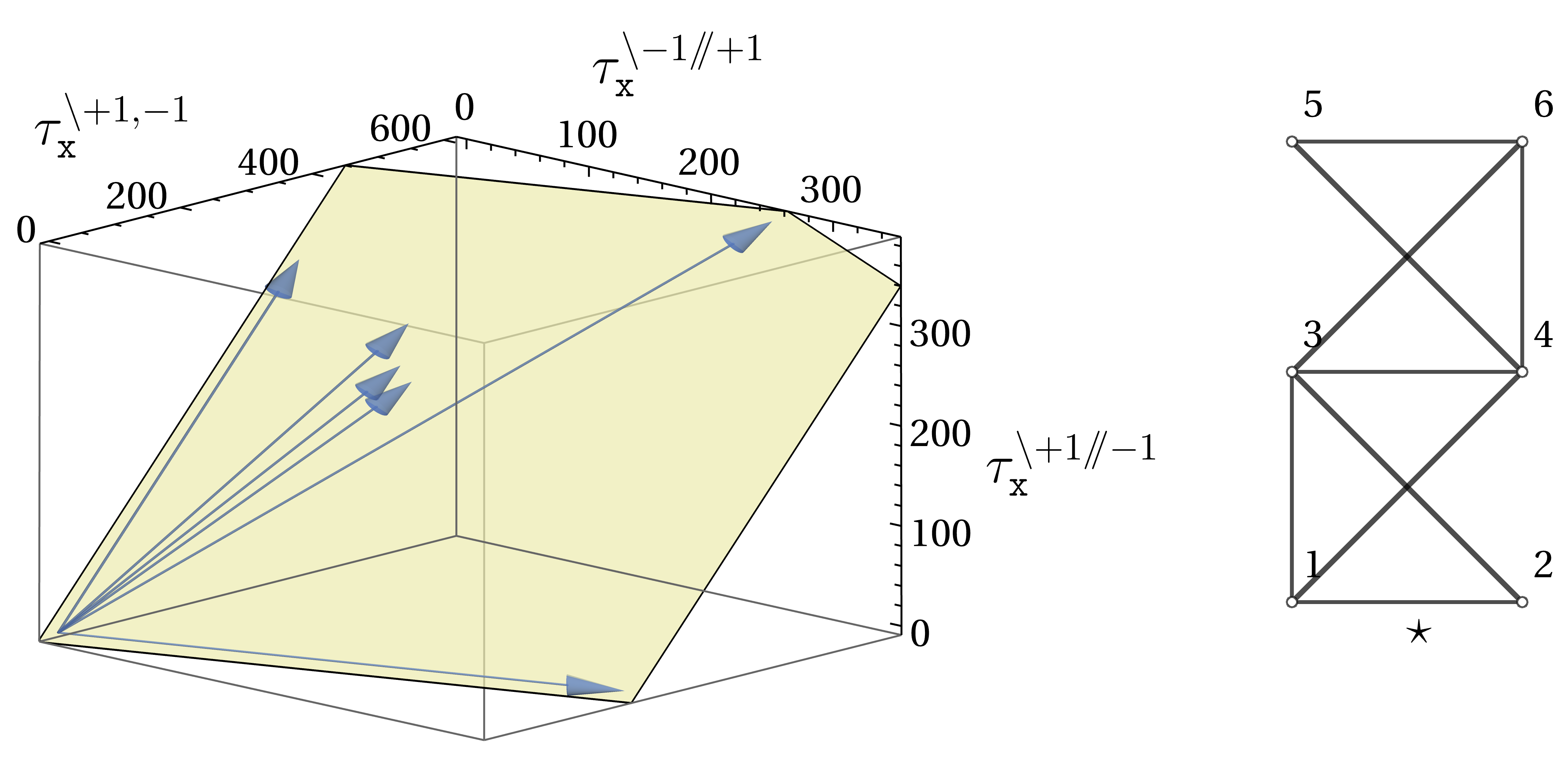}
 \end{center}
  \caption{Coplanarity of spanning-tree vectors defined in Eq.\:\eqref{eq:sigma}, for the network depicted on the right. The pinned edge is $1\mbox{---}2$ and the depicted vectors $\boldsymbol{\tau}_\x$ are rooted at each vertex $\x$ of the network, all of them lying on the plane in yellow.
  The rates were drawn randomly between $0.5$ and $3$.}
  \label{fig:coplanarity}
\end{figure}

Fig.\,\ref{fig:coplanarity} suggests that coplanarity holds also for more complicated networks, as we are going to prove.

\section{Setting, methods and results}

\subsection{Rooted spanning trees and deletion-constriction}

We consider a weighted directed graph $\mathscr{G} = (\mathscr{X},\E,r)$, where $\x \in \mathscr{X}$ are the vertices,  where the edges $\x \to \y \in \E \subseteq \mathscr{X} \times \mathscr{X}$ are ordered couples of vertices, and where $r:\E \to \mathbb{R}^+$ is a positive weight. For each couple of edges $\x \to \y$ and $\y \to \x$ we choose a reference forward orientation and denote $\pm e$ oriented edges connecting a source vertex $\s(\pm e) \in \mathscr{X}$ to a target vertex $\s(\mp e)$. In general, different weights are associated to edges $+e$ and $-e$. If $r(+e) = r(-e)$ for all edges then the graph is undirected. By extension $r$ will also be the function that takes the product of rates in a subset of edges, on the assumption $r(\varnothing) = 1$.

We assume the graph to be irreducible, that is, there exists a path $\x \path \y$ of nonzero weight between any two vertices. 

In an undirected graph a tree $\T$ is a subset of edges that contains no cycle; it is spanning if it connects all vertices. In a directed graph, a rooted spanning tree $\T_\x$ with root $\x$ is a spanning tree such that each edge is directed along the unique path that leads to the root. 

We now define a generalization of the rooted spanning tree polynomial. 

\begin{definition}
\label{th:rsp}
Let $\A$ and $\B$ be two non-intersecting subsets of $\E$. \emph{The conditioned rooted spanning tree polynomial} is defined as
\begin{align}
\tau^{\del \A \con \B}_\x = \sum_{\substack{\T_\x \\ \A \cap \T_\x = \varnothing \\ \B \subseteq \T_\x}}  r(\T_\x). \label{eq:trees}
\end{align}
\end{definition}
\noindent
Clearly it is a function of all rates. In particular, the sum spans over trees that do not contain edges in $\A$ and that must contain edges in $\B$.
Notice that the above polynomial is non-negative. Furthermore, $\tau^{\del \A \con \B}_\x$ does not depend on any of the rates in $\A$, and is multilinear in all the rates in $\B$. Consequently, we can factorize the rates in $\B$, and rewrite  
\begin{align}
\tau^{\del \A \con \B}_\x  = \dot{\tau}^{\del \A \con \B}_\x \prod_{e \in \B} r(e) \label{eq:dotted}
\end{align}
where now the rescaled spanning-tree polynomial $\dot{\tau}^{\del \A \con \B}_\x$, defined by this relation, does not depend on any of the rates in $\A$ and $\B$.

The (unconditioned) rooted spanning tree polynomial $\tau_\x = \sum_{\T_\x}  r(\T_\x)$ is obtained for $\A = \B = \varnothing$. Notice that in the case the graph is undirected $\tau^{\del \A \con \B}_\x = \tau^{\del \A \con \B}$ does not depend on the root. Furthermore, notice that one can obtain $\tau^{\del \A \con \B}_\x$ from $\tau_\x$ by taking derivatives with respect to $\log r(e)$ for all $e \in \B$, and by evaluating at $r(e') \equiv 0$ for all $e' \in \A$.

The following result holds for such polynomials.

\begin{theorem}
\label{th:gendelcon}
Let $\I \subseteq \E$ be any subset of the edge set. Then the unconditioned rooted spanning tree polynomial can be given in terms of conditioned rooted spanning-tree polynomials as 
\begin{align}
\tau_\x = \sum_{\A \subseteq \I} \tau_\x^{\del \A \con (\I \del \A)} \label{eq:gendelcon}
\end{align}
where $\A$ ranges among any subset of $\I$ (including $\emptyset$ and $\I$ itself).
\end{theorem}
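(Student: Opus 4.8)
The plan is to prove the identity by a direct partition of the set of rooted spanning trees, with no inclusion--exclusion required. Fix the root $\x$ and recall that $\tau_\x = \sum_{\T_\x} r(\T_\x)$ ranges over \emph{all} rooted spanning trees. I would reorganize this sum according to how each tree meets the distinguished edge set $\I$, that is, according to its ``trace'' $\T_\x \cap \I$.

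The key step is to observe that, for a fixed $\A \subseteq \I$, a tree $\T_\x$ contributes to $\tau_\x^{\del \A \con (\I \del \A)}$ precisely when it satisfies the two defining conditions of Definition~\ref{th:rsp}, namely $\A \cap \T_\x = \varnothing$ and $(\I \setminus \A) \subseteq \T_\x$. Since $\I$ is the disjoint union $\A \sqcup (\I \setminus \A)$, these two conditions are jointly equivalent to the single requirement $\T_\x \cap \I = \I \setminus \A$. In other words, the term indexed by $\A$ collects exactly those trees whose trace on $\I$ equals $\I \setminus \A$.

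From here the conclusion is immediate. As $\A$ runs over all subsets of $\I$, its complement $\B = \I \setminus \A$ runs over all subsets of $\I$ as well, and the possible traces $\T_\x \cap \I$ are exactly these subsets. Hence the events $\{\T_\x \cap \I = \B\}$, for $\B \subseteq \I$, form a partition of the set of all rooted spanning trees: every $\T_\x$ lies in exactly one class, namely the one with $\B = \T_\x \cap \I$ (equivalently $\A = \I \setminus (\T_\x \cap \I)$). Summing $r(\T_\x)$ first within each class and then over classes reproduces the full sum $\sum_{\T_\x} r(\T_\x) = \tau_\x$, which is the claimed identity~\eqref{eq:gendelcon}.

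The argument is entirely combinatorial and I do not anticipate a genuine obstacle; the only point requiring care is the equivalence in the middle step, which hinges on $\A$ and $\I \setminus \A$ being complementary \emph{within} $\I$ (not within all of $\E$), so that ``avoid $\A$'' together with ``contain $\I \setminus \A$'' pins down the trace on $\I$ exactly rather than merely bounding it. If one prefers an algebraic phrasing, the same identity can be checked at the level of polynomials using~\eqref{eq:dotted}: each $\tau_\x^{\del \A \con (\I \del \A)}$ is obtained from $\tau_\x$ by extracting the monomials in which every edge of $\I \setminus \A$ appears and no edge of $\A$ appears, and grouping the monomials of $\tau_\x$ by their restriction to the variables $\{ r(e) : e \in \I \}$ yields the same partition.
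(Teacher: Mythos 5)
Your proof is correct and is essentially the paper's own argument: the paper swaps the order of summation and observes that for each tree $\T_\x$ there is a unique $\A = \I \del (\I \cap \T_\x)$ satisfying both conditions, which is exactly your observation that the conditions $\A \cap \T_\x = \varnothing$ and $\I \del \A \subseteq \T_\x$ pin down the trace $\T_\x \cap \I$ to equal $\I \del \A$, so the classes partition the set of trees. The two phrasings (fixing the tree and counting admissible $\A$, versus fixing $\A$ and characterizing the admissible trees) are dual presentations of the same double-counting argument.
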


\begin{proof}
Using the definition Eq.\,(\ref{eq:trees}) with $\B = \I \del \A$ and swapping sums we obtain
\begin{align}
 \sum_{\A \subseteq \I} \tau_\x^{\del \A \con (\I \del \A)} = \sum_{\T_\x} r(\T_\x) \sum_{\substack{ \A \subseteq \I \\ \A \cap \T_\x = \varnothing \\ \I \del \A \subseteq \T_\x} } 1
\end{align}
The formula follows from the fact that, for given $\T_\x$ and $\I$, there is a unique subset $\A$ that meets the criteria of the right-hand sum  $\A = \I \del (\I \cap \T_\x)$ (see diagram in Fig.\,\ref{fig:venn}), in such way that $\sum_{\substack{ \A \subseteq \I \\ \A \cap \T_\x = \varnothing \\ \I \del \A \subseteq \T_\x} } 1 = 1$.
\end{proof}

\begin{figure}
\begin{center}
  \includegraphics[width=.4\linewidth]{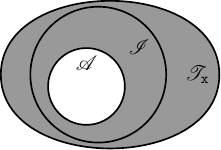}
\end{center}
  \caption{The grey area is the set of edges of the rooted spanning tree $\T_\x$, which by deletion does not contain the edges in $\A$ (smaller circle), which is a subset of $\I$; due to constriction $\I \del \A$ must belong to $\T_\x$. It follows that, given $\I$ and $\T_\x$, $\A$ is uniquely determined (possibly empty).}
  \label{fig:venn}
\end{figure}

The notation $\del$ and $\con$ is borrowed from the deletion-contraction paradigm $(\del,~\reflectbox{$\setminus$})$ for undirected graphs, by which new graphs are obtained by either removing an edge or shrinking its extremal vertices to a unique vertex (and removing the resulting loop), or vice-versa by adding edges between existing vertices or exploding a vertex into two vertices connected by an edge. Denoting $\dot{\tau}$ the undirected spanning-tree polynomials, the deletion-contraction formula~\cite{bollobas2000contraction} states that $\dot{\tau} = \dot{\tau}^{\del e} + r(e) \dot{\tau}^{\reflectbox{$\setminus$} e}$ and, provided that the spanning-tree polynomial of a graph consisting only of disconnected vertices is $1$, it is constructive (it functions as recursive definition). In fact, it is a specialization to trees of a more general result applying to the Tutte polynomial, that also accounts for other combinatorial properties of graphs beyond trees (e.g.\:one of its evaluations yields the chromatic polynomial)~\cite{tutte2004graph,brylawski1992tutte}. 

Taking $\I = \{+e\}$ in Eq.\,(\ref{eq:gendelcon}) we find that (up to a factor $r(+e)$) the same formula applies to the rooted spanning-tree polynomial
\begin{align}
\tau_\x = \tau^{\del +e}_\x + \tau^{\con +e}_\x,  \label{eq:delcon}
\end{align} 
simply stating the obvious fact that a spanning tree either contains or does not contain a given edge. We propose for this and similar formulas the term deletion-constriction. Notice that this logical acceptation of deletion-constriction is different from the topological acceptation of deletion-contraction for undirected graphs, in that f it does not entail the existence of a graph where some ``shrinking'' produces the desired polynomials. A small immaterial difference between constriction and contraction is that in the latter case the rate of the contracted edge does not appear in the contracted spanning-tree polynomial (thus the difference between Eq.\,(\ref{eq:delcon}) and its undirected analogue).

There is one special case in which constriction becomes contraction: when the root is the source or target of the deleted-contracted edge itself. In this case we have that, provided $\pm e$ both belong to the set of edges,
\begin{align}r(+e) \tau^{\con -e}_{\s(+e)} = r(-e) \tau^{\con +e}_{\s(-e)} = r(-e) r(+e)\dot{\tau}_\e, \label{eq:reps}
\end{align}
where $\dot{\tau}_\e$ is the rooted spanning-tree polynomial of the graph obtained by shrinking edges $+e$ and $-e$ to a unique vertex $\e$, rooted at that vertex (the above equality is intuitive, but see Ref.\,\cite{polettini2017effective} for an explicit algebraic proof).

A consequence of the fact that constriction does not have a graphical representation is that, differently from undirected graphs, to the best of our understanding, deletion-constriction is not recursive and constructive, i.e.\:one cannot build-up the spanning-tree polynomial of a directed graph starting from those of smaller graphs, at least not in the usual intuitive way.

Finally, let us mention that a Tutte-type polynomial for directed graphs has been proposed in Ref.\,\cite{awan2020tutte} satisfying a variant of the deletion-contraction formula. Deletion-contraction is established for signed graphs where each edge carries weight $\pm 1$, and which is related to knots and their invariants~\cite{kauffman1989tutte}.

\subsection{Tree surgery and edge swaps}

\label{sec:swaps}

To obtain our main result we will operate some tree surgery, as introduced in Ref.\,\cite{owen2020universal}. We will only be concerned with edge swaps between two rooted spanning trees $\T_\x$
and $\T_\y$.
The idea is that, if there is no issue with edges that cannot belong to one of the trees while it might belong to the other, by repeatedly swapping edges in a principled manner one obtains two new spanning trees $\T'_\x$
and $\T'_\y$
with the roots interchanged, in such a way that the product of their weights is preserved
\begin{align}
r(\T'_\x) r(\T'_\y) = r(\T_\y) r(\T_\x).
\end{align}

In the process, one of the two trees temporarily becomes a doubly rooted spanning tree $\mathscr{S}_{\x,\y;\z}$, defined as a spanning tree where the orientation of the paths starting from a branching point $\z$ is towards root $\x$ on the one side and towards $\y$ on the other. Notice that a rooted spanning tree can be seen as a degenerate doubly-rooted spanning tree with branching point one of the two roots, e.g.\:$\T_\x = \mathscr{S}_{\x,\y;\y}$. A useful characterization of a rooted spanning tree is as a minimal set of edges connecting all vertices such that each vertex has exactly one outgoing edge (out-degree $1$) apart from one vertex which has none (the root, with out-degree $0$). A non-degenerate doubly-rooted spanning tree is such that one vertex has out-degree $2$ (the branching vertex), two vertices have out-degree $0$ (the roots), and all others have out-degree $1$.


Let us now review the swapping map. We view $\T_\x$ as a degenerate doubly-rooted spanning tree $\S_{\x,\y;\y}$. In $\T_\x$ there is a unique path $\y \path  \x$. Take the first edge $e$ along it. Its removal from $\T_\x$ disconnects $\X$  into two basins of vertices, each spanned by disconnected trees, one with root $\y$ and one with root $\x$. Now identify the same two basins in $\T_\y$. Because $\x$ and $\y$ belong to different basins, along the unique path $\x \path \y$ in $\T_\y$ there is at least one edge that reconnects the two basins\footnote{One may be tempted to intuitively think that there is only one such edge: this is true of path  $\y \path \x$ in $\T_\x$, but not of path $\x \path \y$ in $\T_\y$.}. Take $f$ as the last of these edges, on the temporary assumption that its source vertex $\z$ is different from $\x$. We now swap edges $e$ and $f$ among the two trees. First we map $\T_\y \to \{\T_\y  \setminus f\} \cup e$. Notice that because of the removal of $f$, $\z$ has no outgoing edge. Furthermore, $\y$ now has out-degree $1$ provided by the outgoing edge $e$. The degrees of all other vertices are untouched, and because it is connected, this object is a spanning tree with ``moving'' root $\z$. Second we map $\T_\x \to \{\T_\x \setminus e\} \cup f$. The removal of $e$ deprives $\y$ of its outgoing edge, so now both $\x$ and $\y$ are roots. The addition of $f$ gives $\z$ a second outgoing edge. Thus this object is a doubly-rooted spanning tree $\S_{\x,\y;\z}$ with ``moving'' branching vertex $\z$. Repeat the operation. Now let $e'$ be the first edge on the unique path $\z \path \x$ in $\S_{\x,\y;\z}$. On the second iteration when removing $e'$ and identifying $f'$, the out-degree of $\z$ goes back from $2$ to $1$ while that of the source $\z'$ of $f'$ of goes to $2$, unless $\z'$ is $\x$, in which case its out-degree goes to $1$ and it is no longer a root, and the doubly-rooted spanning tree degenerates into a rooted spanning tree with root $\y$. But this is bound to happen, as the unique path $\x \path \z'$ is strictly contained in the previous path $\x \path \z$, and if we are not yet at $\z' = \x$ we can just repeat the procedure.

Importantly, edge swaps 1) are invertible (see Ref.\,\cite{owen2020universal} for a constructive proof); 2) do not swap an edge whose target is the root of the other tree (in fact, by construction, at any step $e$, $e'$ cannot target $\y$ as they belong to the unique path $\y \leadsto \x$; $f$, $f'$ cannot target $\x$ as $\x$ always belongs to the basin from which they lead out); 3) do not duplicate an edge in an output (doubly rooted) tree. An important consequence of this consideration is that, letting $\A$ and $\B$ be nonintersecting subsets of the edge space not containing the pinned edges $+1,-1$, the above procedure can also be applied to conditioned spanning trees $\T_\x, \T_\y$ furtherly conditioned to pass through $\B$ and not through $\A$ without actually worrying about these other constraints (an example of the consequence of coplanarity in this case was given at the end of Sec.\,\ref{sec:example}, with $\B = {\mathtt{c} \to \mathtt{b}}$). This generalization would be trivial for non-directed graphs, where there always is a graphical representation of contraction. Here it is slightly more subtle.

\subsection{Second-order deletion-constriction formula for a single edge}

Without loss of generality we now ``pin'' one edge $e = 1$ in both directions $+1, -1$, both with nonvanishing weight, assuming it is not a bridge, i.e.\:an edge whose removal disconnects the graph into two subgraphs. We will be concerned with the following conditioned rooted spanning tree polynomials: $\tau_\x^{\del +1, -1}$ rooted in $\x$ not containing either direction of the pinned edge; $\tau_\x^{\del \mp 1 \con \pm 1}$ rooted in $\x$ containing $\pm 1$ but not $\mp 1$; $\tau_{\s(\mp 1)}^{\del +1, -1}$ rooted at the source or target of the pinned edge but not containing the pinned edge; ${ \dot{\tau}}_{\mathtt{1}}$ spanning trees in the graph obtained by contracting the pinned edge to a unique vertex $\mathtt{1}$, rooted at that vertex.

From Eq.\,(\ref{eq:gendelcon}) the following ``first-order'' deletion-constriction formula holds
\begin{align}
\tau_\x = \tau_\x^{\del +1, -1} +  \tau_\x^{\del - 1 \con + 1} +  \tau_\x^{\del + 1 \con - 1},
\label{eq:stst}\end{align}
which intuitively states that a spanning tree either does not pass through neither $+1$ nor $-1$, or passes through either one of them. In fact, a spanning tree cannot pass through both $+1$ and $-1$.

We can now formulate the result illustrated in Fig.\,\ref{fig:treesurgery}.

\begin{figure}
  \centering
  \includegraphics[width=.8\linewidth]{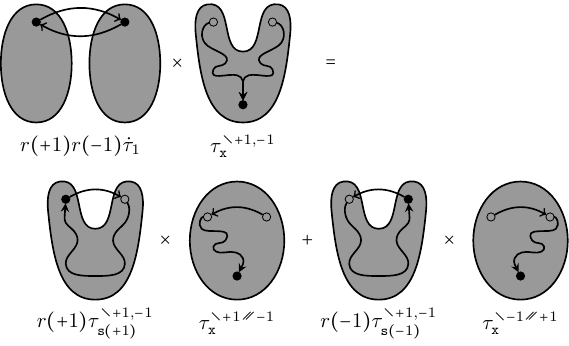}
  \caption{Illustration of the main result, Eq.\,(\ref{eq:delconnew}). Shaded areas represent spanning trees with root in the fully bulleted vertex, while the circles are other vertices different from the root. Directed arrows are specific transitions: if they exit the shaded area their rates are multiplied to the overall tree polynomial, if they are within the shaded area it means that the tree is forced to pass through them. The dashed arrows instead signal that the transitions do not belong to the spanning tree.
  Wiggled arrows denote specific paths within the spanning tree. The reason the lower circled vertex is not represented in some of the terms is because it is undecided whether or not it falls along the wiggled paths (on the lower line), or to which gray area it belongs (on the upper line).}
  \label{fig:treesurgery}
\end{figure}

\begin{theorem}
\label{th:second}
The conditioned spanning tree polynomials satisfy the following ``second-order'' deletion-constriction relation:
\begin{align}
r(+1) r(-1) \dot{\tau}_{\mathtt{1}} \tau_\x^{\del +1, -1} = r(+1) \tau_{\s(+ 1)}^{\del +1, -1} \tau_\x^{\del + 1 \con - 1} + r(-1) \tau_{\s(- 1)}^{\del +1, -1} \tau_\x^{\del -1 \con +1}.
\label{eq:delconnew}
\end{align}
\end{theorem}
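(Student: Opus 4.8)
The plan is to convert the claimed polynomial identity into a weight-preserving bijection between pairs of rooted spanning trees, realised by the edge-swap surgery of Section~\ref{sec:swaps}. Write $a=\s(+1)$ and $b=\s(-1)$ for the two endpoints of the pinned edge, so that $+1\colon a\to b$ and $-1\colon b\to a$. First I would rewrite the left-hand side so that both factors are genuine spanning-tree polynomials: by Eq.~\eqref{eq:reps} one has $r(+1)r(-1)\dot\tau_{\mathtt 1}=r(+1)\tau^{\con -1}_{a}=r(-1)\tau^{\con +1}_{b}$, whence the left-hand side equals $r(+1)\,\tau^{\con-1}_{a}\,\tau^{\del+1,-1}_\x$, a generating function over pairs $(\T_a,\T_\x)$ in which $\T_a$ is rooted at $a$ and passes through $-1$, while $\T_\x$ is rooted at $\x$ and avoids the pinned edge. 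Each of the two right-hand terms is likewise a sum over pairs of spanning trees weighted by a product of rates. The degenerate cases $\x\in\{a,b\}$ collapse one right-hand term to zero and follow directly from Eq.~\eqref{eq:reps}, so I assume $\x\notin\{a,b\}$.

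The core of the argument is a bijection between the left-hand pairs and the disjoint union of the two right-hand families. I would feed $(\T_a,\T_\x)$ into the swapping map of Section~\ref{sec:swaps}, which interchanges the two roots. The key structural fact is that each elementary swap merely exchanges one edge of one tree for one edge of the other, so the combined edge-multiset of the two trees — and therefore the product of weights — is preserved throughout; moreover this multiset contains the pinned edge exactly once, as $-1$, and never as $+1$. As the root migrates from $a$ to $\x$, the lone pinned edge is transported from the auxiliary tree into the $\x$-rooted tree, producing a pair in which the $a$-rooted tree avoids the pinned edge and the $\x$-rooted tree passes through $-1$: exactly the first right-hand term, carrying the spectator factor $r(+1)$.

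This naive picture is incomplete, and repairing it is where I expect the real work to lie. Whether the pinned edge actually leaves the auxiliary tree depends on which basin the endpoint $b$ occupies once the outgoing edge of $\x$ is cut: if $b$ lands on the $\x$-side then $-1=b\to a$ is a reconnecting edge and is swapped across (first right-hand term), whereas if $b$ lands on the $a$-side then $-1$ is internal and the $a\leftrightarrow\x$ swap produces nothing new. These exceptional pairs are precisely the ones that must instead be matched, by a swap pivoting on $b$, to the \emph{second} right-hand family (rooted at $b$, passing through $+1$). The \emph{main obstacle} is therefore to show that this dichotomy is exhaustive and mutually exclusive and that the resulting map is invertible: one must control the ``last reconnecting edge'' choice and the moving branch vertex flagged in the footnote of Section~\ref{sec:swaps} (several edges may reconnect the basins along the path from $\x$ to an endpoint), rule out edge duplication, and check the boundary coincidences (branch vertex reaching an endpoint, $\x$ incident to the pinned edge) using properties~1--3 of the swap.

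As an independent consistency check I would verify the equivalent pinned-edge-free form. Dividing by $r(+1)r(-1)$ and deleting the pinned edge (legitimate since it is not a bridge, so the reduced graph stays irreducible), the statement becomes $\tau_{\{a,b\}}\,\tau_\x=\tau_a\,\tau^{\,a}_{\{\x,b\}}+\tau_b\,\tau^{\,b}_{\{\x,a\}}$, where $\tau_{\{i,j\}}$ is the two-rooted spanning-forest polynomial with roots $i,j$ and the superscript restricts to forests in which the marked endpoint lies in the tree rooted at $\x$. The restriction is essential — dropping it makes the identity false — which is exactly the combinatorial shadow of the basin dichotomy above; I have checked this reduced identity on small graphs.
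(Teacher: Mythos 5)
Your overall plan coincides with the paper's proof: use Eq.~\eqref{eq:reps} to rewrite the left-hand side as a sum over pairs of genuine rooted spanning trees, then build a weight-preserving bijection onto the two right-hand families via the edge-swap surgery of Sec.~\ref{sec:swaps}, closing with invertibility. However, the step you yourself flag as ``the main obstacle'' is essentially the entire content of the paper's proof, and the dichotomy you propose in its place fails on both counts. Writing $a=\s(+1)$, $b=\s(-1)$ as you do: the criterion is ill-posed, because the only tree in which $\x$ has an outgoing edge is $\T_a$, and cutting that edge splits $\T_a$ into an $\x$-basin and an $a$-basin; since $-1$ is the unique outgoing edge of $b$ in $\T_a$, the path from $b$ to the root $a$ is the single edge $-1$ and never passes through $\x$, so $b$ \emph{always} lies in the $a$-basin and your first case never occurs (and if you instead meant cutting the outgoing edge of $a$ in $\T_\x$, the reconnecting edges are then drawn from $\T_\x$, which contains no pinned edge, so ``$-1$ is a reconnecting edge'' is impossible). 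The conclusion is also backwards: whichever way you assign the two roles in the surgery, if the pinned edge $-1$ \emph{is} exchanged between the trees it lands in the output tree rooted at $a$, reproducing a left-hand-type pair; the first right-hand term is obtained precisely when $-1$ is \emph{not} swapped. The correct criterion is whether $-1$ lies on the path $\x \path a$ inside $\T_a$; being directed into $a$, it can only be the final edge of that path.

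What is missing is the paper's re-rooting maneuver, which makes the case split and the safety of the swap automatic rather than something to be ``controlled.'' Each left-hand term has two representations by Eq.~\eqref{eq:reps}: pinned tree $\T_a \owns -1$ rooted at $a$, or the re-rooted $\T_b = \T_a \setminus \{-1\} \cup \{+1\}$ rooted at $b$. Exactly one of the two has the path from $\x$ to the root free of the pinned edge: if $\x \path a$ ends with $-1$, dropping that last edge gives the pinned-free path $\x \path b$ in $\T_b$; otherwise $\x \path b$ in $\T_b$ ends with $+1$ and one keeps root $a$. With that choice the surgery provably never touches the pinned edge: in this direction because reconnecting edges are only ever chosen on nested, shrinking initial segments of the original path from $\x$ to the moving root, and in the converse direction (right-hand to left-hand, which your outline does not treat and which supplies surjectivity) because a swapped edge never targets the root of the other tree (property 2 of Sec.~\ref{sec:swaps}) while the pinned edge of $\tau_\x^{\del \pm 1 \con \mp 1}$ targets $\s(\pm 1)$. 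The two root choices then land exactly in the two right-hand families, and invertibility of the surgery closes the bijection. Without this maneuver your dichotomy is neither exhaustive nor exclusive, and the proposed map is not even well defined, let alone bijective; your forest-identity check at the end is a useful sanity test but carries no proof weight.
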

\begin{proof}
We proceed by swapping terms between the left-hand side and the right-hand side, but we need to be careful due to edges that we need to pass by, or to avoid.

In the first [resp. second] product on the right-hand side consider a term $r(\T_{\s(\pm 1)}) r(\T_\x)$ with $\T_{\s(\pm 1)} \nowns  (+1, -1)$ and $\T_\x \owns - 1$ [resp $+1$]. We must make sure that edge $\mp 1$ is not swapped. But this cannot happen, as the unique path $\s(\pm 1) \path \x$ cannot contain edge $\mp 1$ as it points towards $\s(\pm 1)$, and the branching vertex is thus allowed to safely navigate to $\x$. Thus we end up with $\T'_{\x} \nowns  (+1, -1)$ and $\T'_{\s(\pm 1)} \owns \mp 1$. Now notice that $r(\T'_{\x} ) r(\T'_{\s(+ 1)}) $ [resp. $r(\T'_{\x} ) r(\T'_{\s(- 1)}) $] is a term in the left-hand side product according to the first [resp. second] representation of $\tau_\mathtt{1}$ in Eq.\,(\ref{eq:reps}).

The other way around, let us now consider a term  of the left-hand side product. We would be tempted to apply the swapping map, but notice that in this case $\pm 1$ might be the final transition in the path $\x \path \s(\mp 1)$, thus producing an unintended edge swap. However, the two representations in Eq.\,(\ref{eq:reps}) grant that we can always re-root from $\s(\mp 1)$ to $\s(\pm 1)$, in which case the re-rooted path $\x \path \s(\pm 1)$ does not contain edge $\pm 1$. Thus we can safely apply the swapping map, and we end up with a term belonging to the right-hand side product. But since the swapping map is invertible, we have established Eq.\,(\ref{eq:delconnew}).
\end{proof}

\subsection{Coplanarity of spanning-tree vectors}
\label{sec:coplanarity}

We now define $\sigma_{\varnothing 1} := - r(+1)r(-1) \dot{\tau}_{\mathtt{1}}$ and $\sigma_{\pm 1} = r(\pm 1) \tau_{\s(\pm 1)}^{\del +1, -1}$ and form the vectors
\begin{align}
\bs{\tau}_\x = \left(\begin{array}{c} \tau_\x^{\del +1, -1} \\ \tau_\x^{\del - 1 \con + 1} \\  \tau_\x^{\del + 1 \con - 1}  \end{array} \right), \qquad
\bs{\sigma}_{1} = \left(\begin{array}{c} \sigma_{\varnothing 1} \\ \sigma_{+1} \\ \sigma_{-1} \end{array} \right), \qquad \bs{1} = \left(\begin{array}{c} 1 \\ 1 \\ 1 \end{array} \right). \qquad  
\label{eq:sigma}
\end{align}
The deletion-constriction formula writes
\begin{align}
\tau_\x  = \bs{1} \cdot \bs{\tau}_\x.
\end{align}

\begin{theorem}
\label{th:coplanarity}
Vectors $\bs{\tau}_\x$, for $\x \in \mathscr{X}$, are coplanar.
\end{theorem}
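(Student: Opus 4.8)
The plan is to reduce coplanarity to the existence of a single fixed normal vector. A family of vectors in $\mathbb{R}^3$ is coplanar, i.e. spans a linear subspace of dimension at most two, precisely when there is a nonzero vector $\bs{\sigma}$ with $\bs{\sigma}\cdot\bs{\tau}_\x = 0$ for every $\x$; the sought plane is then the orthogonal complement $\bs{\sigma}^{\perp}$. The natural candidate for such a normal is the vector $\bs{\sigma}_1$ already assembled in Eq.\,\eqref{eq:sigma}, whose entries are built solely from pinned-edge data. So the whole statement will follow once I verify two things: that $\bs{\sigma}_1$ annihilates all the $\bs{\tau}_\x$, and that $\bs{\sigma}_1$ is not the zero vector.

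First I would expand the scalar product $\bs{\sigma}_1\cdot\bs{\tau}_\x$ entry by entry. Inserting the definitions $\sigma_{\varnothing 1} = -r(+1)r(-1)\dot{\tau}_{\mathtt{1}}$ and $\sigma_{\pm 1} = r(\pm 1)\tau_{\s(\pm 1)}^{\del +1,-1}$, the term along the first coordinate reproduces $-r(+1)r(-1)\dot{\tau}_{\mathtt{1}}\,\tau_\x^{\del +1,-1}$, while the two remaining terms reproduce the source-rooted products appearing on the right-hand side of the second-order deletion-constriction identity \eqref{eq:delconnew} (once the $\pm 1$ labels of the constricted trees are correctly paired against the matching components of $\bs{\tau}_\x$). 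Thus $\bs{\sigma}_1\cdot\bs{\tau}_\x$ is exactly the right-hand side of Eq.\,\eqref{eq:delconnew} minus its left-hand side, and Theorem\,\ref{th:second} says this vanishes. The decisive structural point, and the reason a single plane serves every root at once, is that no entry of $\bs{\sigma}_1$ depends on $\x$: the quantities $r(\pm 1)$, $\dot{\tau}_{\mathtt{1}}$, and $\tau_{\s(\pm 1)}^{\del +1,-1}$ are all attached to the pinned edge alone. Hence one and the same vector is orthogonal to $\bs{\tau}_\x$ for all $\x\in\mathscr{X}$.

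It then remains to confirm that $\bs{\sigma}_1\neq 0$, so that $\bs{\sigma}_1^{\perp}$ is genuinely a plane and not all of $\mathbb{R}^3$. Here I would invoke the standing hypotheses that the graph is irreducible and all weights are strictly positive: the contracted-graph polynomial $\dot{\tau}_{\mathtt{1}}$ is a sum of positive monomials and is therefore strictly positive, so the first entry $\sigma_{\varnothing 1}$ is strictly negative and $\bs{\sigma}_1$ cannot vanish. Together with the orthogonality, this establishes coplanarity. I do not anticipate any genuine difficulty: essentially all the analytic content is carried by Theorem\,\ref{th:second}, and the only points demanding care are the bookkeeping of the $+1$ and $-1$ labels when identifying the dot product with Eq.\,\eqref{eq:delconnew}, and the observation that $\bs{\sigma}_1$ is literally independent of the root $\x$.
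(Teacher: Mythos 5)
Your proposal is correct and follows essentially the same route as the paper: Theorem~\ref{th:second} is recognized as precisely the statement $\bs{\sigma}_1 \cdot \bs{\tau}_\x = 0$ for the root-independent vector $\bs{\sigma}_1$ of Eq.\,\eqref{eq:sigma}, which forces all $\bs{\tau}_\x$ into the plane $\bs{\sigma}_1^{\perp}$. Your additional verification that $\bs{\sigma}_1 \neq 0$ (via positivity of $\dot{\tau}_{\mathtt{1}}$ under irreducibility and the non-bridge, positive-weight assumptions on the pinned edge) is a small point the paper leaves implicit, but it does not change the argument.
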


\begin{proof}
Equation \,(\ref{eq:delconnew}) can be rewritten as
\begin{align}
\bs{\sigma}_{1} \cdot \bs{\tau}_\x = 0 \label{eq:central}
\end{align}
and we conclude.
\end{proof}


\subsection{Mutual linearity among two Markov currents}
\label{sec:mutual}

As an application of co-planarity we now reproduce a recent result obtained by the Authors in Ref.\,\cite{harunari2024mutual} by means of a linear algebra approach developed in Ref.\,\cite{aslyamov2023nonequilibrium}.

Consider a continuous-time Markov chain on a graph $\mathscr{G}$ with time-independent rates $r(\pm e)$ of performing transitions $\pm e \in \E$ between vertices $\x \in \mathscr{X}$ in a short-time interval. We are interested in the mean stationary currents
\begin{align}
\jmath_e 
= r(+e) \, p_{\s({+e})} - r(-e) \, p_{\s({-e})} \label{eq:currents}
\end{align}
where $(p_\x)_{\x \in \mathscr{X}}$ is the unique stationary probability of being at a vertex, whose existence and uniqueness is granted by the assumption of irreducibility. We promote the arbitrarily chosen $\jmath_{1}$ as input current and study the dependence of all other currents $\jmath_e$ on the transition rates $\r_1 = (r(+1), r(-1))$ of the input current, while keeping all other rates fixed. We also assume that the removal of edge $1$ does not disconnect the graph.

In general $\jmath_e(\r_1)$ is a nonlinear function of $\r_1$. We will now show that there exist  parameters $\lambda^0_e$ and $\lambda^1_e$, not dependent on $\r_1$ (but possibly dependent on all other rates), such that all currents are linear-affine in the input current
\begin{align}
\jmath_e(\r_{1}) = \lambda^0_e + \lambda^1_e \, \jmath_1(\r_{1}). 
\label{eq:result}
\end{align}
To do so, we will apply Theorem~\ref{th:coplanarity} on the coplanarity of spanning-tree vectors,  choosing as pinned edge the edge of the input current $\j_1$. Notice also,
since any two currents $\jmath_e$ and $\jmath_{e'}$ are linear-affine in $\jmath_1$, 
Eq.~\eqref{eq:result} implies that%
they are linear-affine among themselves with respect to a variation of the forward and backward rates of an arbitrary edge.

By the Markov-chain matrix-tree theorem~\cite{hill1966studies,schnakenberg1976network,caplan1982tl,king1956schematic} (see also Refs.\,\cite{khodabandehlou2022trees,lecturenotes} for recent approaches) the stationary probability is given by
\begin{align}
p_\x = \frac{\tau_\x}{\sum_\y \tau_\y}.
\end{align}
Plugging into the second expression of Eq.\,(\ref{eq:currents}) one finds that the stationary currents are ratios of a homogeneous polynomial of degree $|\mathscr{X}|$ in the rates over the normalization, which is a homogeneous polynomial of degree $|\mathscr{X}|-1$. An important observation is that both numerator and denominator cannot contain quadratic terms in $r(+1)$ and $r(-1)$, because trees contain no cycles and because at numerator all terms containing products $r(+1) r(-1)$ can be shown to cancel out by application of Eqs.\,(\ref{eq:delcon}) and (\ref{eq:reps}). We can then express the dependency of the currents on $\r_{1}$ as
\begin{align}
\jmath_e(\r_1) = \frac{z^\varnothing_e + z_e^{+}r(+1) + z_e^{-} r(-1)}{z^\varnothing_0 + z_0^{+} r(+1) + z_0^{-} r(-1)}
\label{eq:currentz}
\end{align}
where, reminding the definition of the rescaled spanning-tree polynomial in Eq.\,(\ref{eq:dotted}), we defined
\begin{equation}
\begin{aligned}
z_e^\varnothing & = r(+e) \tau_{\s(+e)}^{\del +1,-1} - r(-e) \tau_{\s(-e)}^{\del +1,-1}  \\
z_e^{\pm} & = r(+e)\dot{\tau}_{\s(+e)}^{\del \mp 1 \con \pm 1 } - r(-e) \dot{\tau}_{\s(-e)}^{\del \mp 1 \con \pm 1 } \\
z_1^\varnothing & = 0 \\
z_1^{\pm} & =  \pm \tau_{\s(\pm 1)}^{\del +1, -1 } & \gtrless 0, \neq 0 \\
z_0^\varnothing & = \sum_\x \tau_{\x}^{\del +1,-1} & > 0 \\
z_0^{\pm} & = \sum_\x \dot{\tau}_{\x}^{\del \mp 1 \con \pm 1} & > 0.
\end{aligned}
\end{equation}
The strict inequalities on the right-hand side are due to the assumption we made that removal of edge $1$ does not disconnect the graph, so that there always exists at least a rooted spanning tree.

Let us now temporarily assume the validity of Eq.\,(\ref{eq:result}). Multiplying by the normalization we find
\begin{multline}
\label{eq:inter}
z^\varnothing_e + z_e^{+} r(+1) + z_e^{-} r(-1) = \lambda^0_e [z^\varnothing_0 + z_0^{+} r(+1) + z_0^{-} r(-1)] +  \lambda^1_e [z_1^{+} r(+1) + z_1^{-} r(-1)].
\end{multline} 
Since this must hold for all $\r_1$ one can disentangle it into the system of linear equations
\begin{align}
\label{eq:zs}
\stackrel{(\bz_0 , \bz_1)}{
\overbrace{\left(\begin{array}{cc}
z_0^{\varnothing} & 0 \\
z_0^{+} & z_1^{+} \\
z_0^{-} & z_1^{-}
\end{array}\right)}}
\stackrel{\bs{\lambda}_e}{
\overbrace{
\left(\begin{array}{c} \lambda^0_e \\ \lambda^1_e \end{array} \right)}
} \; =\; \stackrel{\bz_e}{\overbrace{\left(\begin{array}{c} z_e^{\varnothing} \\ z_e^{+}  \\ z_e^{-}  \end{array} \right)}}.
\end{align}
The system is  overdetermined. By the Rouch\'e--Capelli theorem it affords a unique solution if and only if $(\bz_0 , \bz_1)$ is full rank and the augmented matrix $(\bz_0 , \bz_1, \bz_e)$ has the same rank as $(\bz_0 , \bz_1)$. In this case the rank of $(\bz_0 , \bz_1)$ is $2$, because the determinant  of the upper $2 \times 2$ block is nonvanishing,  $z^\varnothing_0 z_1^+ > 0$. The second condition holds if
\begin{align}
\det (\bz_0 , \bz_1 , \bz_e) = 0, \label{eq:determinant}
\end{align}
on which we come back soon. Vice-versa, if this latter holds, we can find $\bs{\lambda}_e$ satisfying Eq.\,(\ref{eq:result}) by inverting any $2\times 2$ block of the above system. Choosing the upper $2 \times 2$ block we obtain
\begin{align}
\left(\begin{array}{c} \lambda_e^0 \\ \lambda_e^1 \end{array} \right) = 
\left(\begin{array}{cc}
z_0^{\varnothing} & 0 \\
z_0^{+} & z_1^{+} \end{array}\right)^{-1}  \left(\begin{array}{c} z_e^{\varnothing} \\ z_e^{+}
\end{array} \right)   = \frac{1}{z_0^{\varnothing} z_1^{+}}  \left(\begin{array}{c} z_1^+ z_e^\varnothing \\ z^\varnothing_0 z_e^+ - z_0^+ z_e^\varnothing\end{array} \right). 
\end{align}
This expression allows us a consistency check, as for vanishing $\bs{r}_1 = 0$ the input current stalls and we find for the output currents $\j_e(0) = \lambda^0_e = z_e^\varnothing / z_0^\varnothing$, consistently with Eq.\,(\ref{eq:currentz}) and with what previously found e.g.\;in Ref.\,\cite{polettini2017effective}. Furthermore, this also implies that all affine coefficients $\lambda^0_e$ vanish if the graph where $\bs{r}_1 = 0$ satisfies detailed balance.

By Eqs.\,(\ref{eq:zs}), vectors $\bz_e$ and $\bz_0$ are easily seen to be linear combinations of vectors $\bs{\tau}_\x$, therefore by coplanarity Eq.\,(\ref{eq:central}) $\bs{\sigma}_{1}$ is orthogonal to all of them. Furthermore it can be checked directly that $\bs{\sigma}_{1} \cdot \bz_1 = 0$. Therefore matrix $(\bz_0, \bz_1, \bz_e)$ has $\bs{\sigma}_{1}$ as left-null vector and therefore its determinant vanishes, granting our result.

\section{Towards multiple pinned edges}

\subsection{Co-hyper-planarity for two pinned edges}
\label{sec:two}

An obvious question is whether the above results generalize to the deletion-constriction through multiple edges. Let us consider here the case of two pinned edges $\pm 1$ and $\pm 2$ with nonvanishing rates in both directions. 
Here we are not so much interested in special cases and our reasoning will be in part speculative.
However, precise assumptions should be made in order for the argument to not fall at any step. A safe set of conditions is that 1) the graph is large enough (at least $|\mathscr{X}| = 9$ vertices -- the reason being that, as we shall soon see, the spanning-tree vectors have dimension $9$, and we want to have enough of them to span the whole vector space); 2) that the graph remains connected after the removal of pinned edges.

As discussed below, our reasoning also requires a technical assumption that, we conjecture, is not a necessary condition for the result to hold.
%



The deletion-constriction formula in Eq.\,(\ref{eq:gendelcon}) reads $\tau_x = \bs{1} \cdot \bs{\tau}_x$ with $\bs{1}$ a vector with all unit entries and 
\begin{align}
\bs{\tau}_\x = \left(\begin{array}{c}
\tau_\x^{\del +1, -1, +2, -2} \\
\tau_\x^{\del -1, +2, -2 \con +1} \\
\tau_\x^{\del +1, +2, -2 \con -1} \\
\tau_\x^{\del +1, -1, -2 \con +2} \\
\tau_\x^{\del +1, -1, +2 \con -2} \\
\tau_\x^{\del -1, -2 \con +1, +2} \\
\tau_\x^{\del +1, -2 \con -1, +2} \\
\tau_\x^{\del -1, +2 \con +1, -2} \\
\tau_\x^{\del +1, +2 \con -1, -2}
\end{array}\right).
\end{align}

Notice that there are no further nonvanishing vector entries in $\bs{\tau}_\x$ since constriction through both $+e$ and $-e$ yields a vanishing entry, as there do not exist spanning trees that contain an edge in both directions (otherwise they would contain a loop, against their definition).

The question we pose is the dimension of the span of $\{\bs{\tau}_\x\}_{\x \in \mathscr{X}}$. Notice that for $\x = \s(+1)$, $\x = \s(-1)$ and $\x = \s(+2)$ we have
\begin{align}
(\bs{\tau}_{\s(+1)}, \bs{\tau}_{\s(-1)} , \bs{\tau}_{\s(+2)}) = \left(\begin{array}{ccc}
\tau_{\s(+1)} ^{\del +1, -1, +2, -2} & \tau_{\s(-1)}^{\del +1, -1, +2, -2}  & \tau_{\s(+2)}^{\del +1, -1, +2, -2} \\
0 & \tau_{\s(-1)}^{\del -1, +2, -2 \con +1} & \tau_{\s(+2)}^{\del -1, +2, -2 \con +1} \\ 
\tau_{\s(+1)}^{\del +1, +2, -2 \con -1} & 0 & \tau_{\s(+2)}^{\del +1, +2, -2 \con -1} \\
\tau_{\s(+1)}^{\del +1, -1, -2 \con +2} & \tau_{\s(-1)}^{\del +1, -1, -2 \con +2} & 0 \\
\tau_{\s(+1)}^{\del +1, -1, +2 \con -2} & \tau_{\s(-1)}^{\del +1, -1, +2 \con -2} & \tau_{\s(+2)}^{\del +1, -1, +2 \con -2} \\
0 & \tau_{\s(-1)}^{\del -1, -2 \con +1, +2} & 0 \\
\tau_{\s(+1)}^{\del +1, -2 \con -1, +2} & 0 & 0 \\
0 & \tau_{\s(-1)}^{\del -1, +2 \con +1, -2}  & \tau_{\s(+2)}^{\del -1, +2 \con +1, -2}  \\
\tau_{\s(+1)}^{\del +1, +2 \con -1, -2} & 0 & \tau_{\s(+2)}^{\del +1, +2 \con -1, -2}
\end{array}\right). 
\end{align}
The lower $3\times 3$ block has determinant
\begin{align}
\tau_{\s(+1)}^{\del +1, -2 \con -1, +2} \tau_{\s(-1)}^{\del -1, +2 \con +1, -2}  \tau_{\s(+2)}^{\del +1, +2 \con -1, -2} > 0,
\end{align}
therefore at least $3$ of the $\bs{\tau}_\x$ are linearly independent. Notice that the removal of the two pinned edges disconnected the graph, the above determinant would vanish.

We now argue that $3$ is indeed the dimension of the span of $\{\bs{\tau}_\x\}_{\x \in \mathscr{X}}$. 

As briefly mentioned at the end of Sec.\,\ref{sec:swaps}, coplanarity can be generalized to the case the spanning-tree 3-vectors having additional constrictions. First we claim that
there exist vectors $\bs{\sigma}_{1}^\bullet$, $\bs{\sigma}_{2}^\bullet$ of the following form (collected in a matrix) that are orthogonal to all of the $\bs{\tau}_\x$:
\begin{align}
& (\bs{\sigma}_{1}^{\del +2,-2} , \bs{\sigma}_{2}^{\del +1,-1} , \bs{\sigma}_{1}^{\del +2 \con -2} , \bs{\sigma}_{2}^{\del +1 \con -1}, \bs{\sigma}_{1}^{\del -2 \con +2}, \bs{\sigma}_{2}^{\del -1 \con +1} )\\
 & \qquad \qquad = \left(\begin{array}{cccccc}
\sigma_{\varnothing 1}^{\del +2,-2} & \sigma_{\varnothing 2}^{\del +1,-1} & 0 & 0 & 0 & 0 \\
\sigma_{+1}^{\del +2,-2} & 0 & 0 & 0 & 0 & \sigma_{\varnothing 2}^{\del -1 \con +1} \\
\sigma_{-1}^{\del +2,-2} & 0 & 0 & \sigma_{\varnothing 2}^{\del +1 \con -1} & 0 & 0\\
0 & \sigma_{+2}^{\del +1,-1} & 0 & 0 & \sigma_{\varnothing 1}^{\del -2 \con +2} & 0 \\
0 & \sigma_{-2}^{\del +1,-1} & \sigma_{\varnothing 1}^{\del +2 \con -2} & 0 & 0 & 0 \\
0 & 0 & 0 & 0 & \sigma_{+ 1}^{\del -2 \con +2} & \sigma_{+ 2}^{\del -1 \con +1}  \\
0 & 0 & 0 & \sigma_{+ 2}^{\del +1 \con -1} & \sigma_{- 1}^{\del -2 \con +2} & 0 \\
0 & 0 & \sigma_{+ 1}^{\del +2 \con -2} & 0 & 0 & \sigma_{-2}^{\del -1 \con +1}\\
0 & 0 & \sigma_{- 1}^{\del +2 \con -2} & \sigma_{-2}^{\del +1 \con -1} & 0 & 0  
\end{array}\right).
\end{align}
The vector entries can be found as per the definitions at the beginning of Sec.\,(\ref{sec:coplanarity}), but with the additional constraints specified as superscript. A way to impose such constraints starting from the definitions of Sec.~(\ref{sec:coplanarity}) is by taking derivatives with respect to the (logarithm of the) rates of the constricted edges and evaluating at zero with respect to the rates of the deleted edges:
\begin{align}
\sigma_{e}^{\del e_1, e_2 , \ldots \con e'_1 , e_2', \ldots} = \left. \frac{\partial}{\partial \log r(e'_1)} \frac{\partial}{\partial \log r(e'_2)} \ldots \sigma_{e} \right|_{r(e_1) = r(e_2) = \ldots = 0}.
\end{align}
Let us now choose (say) the last one $\bs{\sigma}_{2}^{\del -1 \con +1}$. When multiplied by $\bs{\tau}_\x$ we obtain
\begin{align}
\sigma_{\varnothing 2}^{\del -1, \con +1} \tau_\x^{\del -1 +2, -2 \con +1} + \sigma_{+2}^{\del -1 \con +1} \tau_\x^{\del -1, -2 \con +1, +2} + \sigma_{-2}^{\del -1 \con +1} \tau_\x^{\del -1, +2 \con +1, -2} .
\end{align}
As commented in Sec.\,~\ref{sec:swaps}, the swapping procedure is successful also when dealing with additional constraints. Therefore Eq.\,(\ref{eq:delconnew}) applies, and the above expression vanishes.

It remains to prove that the above $\sigma$-vectors are linearly independent. Rescaling each one of them in such a way that the first entry is $1$ the above matrix now reads
\begin{align}
\left(\begin{array}{cccccc}\label{eq:matrixsigmarescaled}
1 & 1 & 0 & 0 & 0 & 0 \\
A & 0 & 0 & 0 & 0 & 1 \\
B & 0 & 0 & 1 & 0 & 0 \\
0 & C & 0 & 0 & 1 & 0 \\
0 & D & 1 & 0 & 0 & 0 \\
0 & 0 & 0 & 0 & E & F \\
0 & 0 & 0 & G & H & 0 \\
0 & 0 & I & 0 & 0 & J \\
0 & 0 & K & L & 0 & 0
\end{array} \right)
\end{align}
with properly defined $A,\ldots, L$. If this matrix were not full-rank, it would afford a right-null vector $\bs{\omega}$. Applying the upper $5$-block to $\bs{\omega}$ we can constrain its form to
\begin{align}
\bs{\omega} = 
\left( \begin{array}{c}
1 \\
-1 \\
{D} \\
- B \\
{  C} \\
-A 
\end{array}
\right).
\end{align}
Multiplying the vector by the last four rows in Eq.~(\ref{eq:matrixsigmarescaled}) we obtain four consistency requirements for $\bs{\omega}$ to be a null vector. For example, multiplying by the six-th row we find
\begin{align}
0 = EC {  -} FA = \frac{\sigma_{+1}^{\del -2 \con +2} }{\sigma_{\varnothing 1}^{\del -2 \con +2} } \frac{\sigma_{+ 2}^{\del +1,-1} }{\sigma_{\varnothing 2}^{\del +1,-1}} {  -} \frac{\sigma_{+ 2}^{\del -1 \con +1}}{\sigma_{\varnothing 2}^{\del -1 \con +1}} \frac{\sigma_{+ 1}^{\del +2,-2} }{\sigma_{\varnothing 1}^{\del +2,-2}}.
\label{eq:zerocondabsurdum}
\end{align}

We now make the technical (and, we believe, unnecessary) assumption that the graph is fully connected and that all rates are mutually incommensurable, so that there do not happen cancellations just due to a peculiar choice of the rates, unless there is a general law valid for any graphs.
%
If Eq.~\eqref{eq:zerocondabsurdum} and similar ones were true, then
under our technical assumptions each one of them would imply additional second-order deletion-constriction formulas among spanning trees. A rapid randomized numerical check shows that these conditions are violated. Therefore most often the $\sigma$-vectors are all independent, and we can thus confirm that in the case of two edges the dimension of the span of the rooted tree vectors is $3$.
We do not pursue a complete proof of this in the present manuscript.


\subsection{Linearity when controlling two Markov input currents}

This latter hyper-coplanarity for two pinned edges can be employed to obtain the mutual linearity 
of any Markov current when controlling the rates of two input currents:
%
\begin{align}
\j_e(\bs{r}_1, \bs{r}_2) = \mu^0_e + \mu_e^1 \, \j_1(\bs{r}_1, \bs{r}_2) + \mu_e^2 \, \j_2(\bs{r}_1, \bs{r}_2) . \label{eq:holds}
\end{align}
The edges $1$ and $2$ of the input currents are chosen arbitrarily, provided their removal does not disconnect the transition graph.
The procedure is the same as in Sec.\,\ref{sec:mutual}:
We apply the co-hyper-planarity of last section to the pinned edges $1$ and $2$.
A generic current can be expressed as
\begin{align}
\jmath_e(\r_1,\r_2) = \frac{{ \bs{w}}_e \cdot \bs{1}}{\bs{w}_0 \cdot \bs{1}}
\end{align}
where now for sake of elegance we incorporated the explicit dependency on $\bs{r}_1,\bs{r}_2$ in the $\bs{w}$'s defined by 
\begin{align}
{ \bs{w}}_0 & = \sum_\x \bs{\tau}_\x \\
{ \bs{w}}_e & = r(+e) \bs{\tau}_{\s(e)} - r(-e) \bs{\tau}_{\s(-e)}.
\end{align}

Equation (\ref{eq:holds}) holds true if and only if
\begin{align}
( { \bs{w}}_e - \mu^0_e { \bs{w}}_0 - \mu^1_e \bs{w}_1 - \mu^2_e \bs{w}_2 ) \cdot \bs{1} = 0.
\end{align}

Thanks to the result in the previous section we know that, given that its columns are linear combinations of the $\tau$-vectors, matrix $({ \bs{w}}_e , { \bs{w}}_0  , { \bs{w}}_1 , { \bs{w}}_2)$ has rank $3$; matrix $({ \bs{w}}_0  , { \bs{w}}_1 , { \bs{w}}_2)$ is also easily shown to have maximal rank under our speculative assumptions. Then the Rouch\'e--Capelli theorem grants that the system ${ \bs{w}}_e = \mu^0_e { \bs{w}}_0 + \mu^1_e { \bs{w}}_1 + \mu^2_e { \bs{w}}_2$ admits a unique solution, and therefore, running the argument backwards, linearity holds.

\section{Conclusions}

Before discussing possible generalizations and connections to other ideas, let us recap our results. We introduced the concept of deletion-constriction for directed graphs on $n$ pinned edges in both directions, generalizing the well-known concept of deletion-contraction for undirected graphs. This leads to the construction of spanning-tree vectors $\bs{\tau}_\x$ for all roots $\x \in \mathscr{X}$ satisfying the deletion-constriction formula $\tau_\x = \bs{1} \cdot \bs{\tau}_\x$. We suggested the idea that such vectors span a vector space of dimension $n+1$ much smaller than their own dimension, and proved this fact  for $n=1$ in full generality and for $n = 2$, under certain conditions. We used these results to provide an alternative proof of a recently found mutual linearity of two currents for Markov chains on the graph~\cite{harunari2024mutual}, and we discussed its possible generalization to three currents.

Under the simplifying assumption that all edges are reversible, for $n$ pinned reversible edges the spanning-tree vectors have dimension
\begin{align}
\sum_{k = 0}^n 2^n {n \choose k} = 3^n,
\end{align}
where the summation takes into account the fact that no tree can contain both directions of an edge. Given the cases $n = 1$ and $n = 2$, it is tempting to speculate that the dimension of the linear space spanned by them is $n+1$. We collected numerical evidence supporting this conjecture in the case $n = 3$ (in which case the spanning-tree vectors have $27$ entries), for graphs up to $13$ vertices and randomized values of the weights of the unpinned edges (best would be to test it on graphs with $|\mathscr{X}| > 27$ vertices, but this goes beyond our computational capabilities). In this case a direct approach such as the one employed for the $n = 2$ case appears to be prohibitive. Using the product deletion-constriction formula we can produce $n 3^{n-1}$ vectors that are orthogonal to all of the $\bs{\tau}_\x$. The rationale behind this latter number is: there are $n$ $\sigma$-vectors which are unconditioned; there are $2(n-1)$ $\sigma$-vectors conditioned on passing through a given edge, $2(n-2)$ through two edges etc. Hence overall we have
\begin{align}
\sum_{k = 0}^n 2^k {n \choose k} (n-k) = n \sum_{k=0}^n 2^k {n-1 \choose k} = n 3^{n-1}
\end{align}
possible $\sigma$-vectors (in fact for $n=1$ we had $1$ and for $n=2$ we had $6$). But for $n \geq 3$ this is way too many: for example for $n = 3$ the $\tau$-vectors have dimension $27$, but the above procedure would produce $3 \times 3^2 = 27$ $\sigma$-vectors, $4$ of which should then be linearly dependent. We do not have an intuition on how to tame the unindependent ones.

Proving mutual linearity would have been straightforward if instead of varying both forward and backward rates $\bs{r}_1$ we only varied one of them, e.g.\:$r(+1)$. In that case in place of Eq.\,(\ref{eq:inter}) we would have found
\begin{align}
\frac{{z'}^\varnothing_e + z_e^+ r(+1)}{{z'}^\varnothing_0 + z_0^+ r(+1)} = {\lambda'}_e^0 + {\lambda'}_e^1 \frac{{z'}^\varnothing_1 + z_1^+ r(+1)}{{z'}^\varnothing_0 + z_0^+ r(+1)},
\end{align}
and the conclusion follows by noticing that the two ratios are Möbius transformations, and by exploiting properties of the Möbius group. It is particularly fitting that the Möbius group structure arises here as in other areas of physics~\cite{bertoldi2000equivalence} that deal with objects that are intrinsically projective (as are probabilities). One mathematical question, independent of the interpretation of the objects involved as ``currents'' of a Markov process, could then be whether our more general result could be the blueprint to frame some sort of multi-linear Möbius group structure.

\bmhead{Acknowledgments}

We thank J.\,Horowitz for the early involvement in the conjectures regarding tree surgery. Moral support from Guido and Piter from Le Aie Research Institute was also key. The research was supported by the National Research Fund Luxembourg (project CORE ThermoComp C17/MS/11696700), by the European Research Council, project NanoThermo (ERC-2015-CoG Agreement No. 681456), and by the project INTER/FNRS/20/15074473 funded by F.R.S.-FNRS (Belgium) and FNR (Luxembourg). The authors acknowledge support from an CNRS Tremplin project. SDC and VL acknowledge support from IXXI, CNRS MITI and the ANR-18-CE30-0028-01 grant LABS.

\bibliography{biblio}


\begin{thebibliography}{35}
\ifx \bisbn   \undefined \def \bisbn  #1{ISBN #1}\fi
\ifx \binits  \undefined \def \binits#1{#1}\fi
\ifx \bauthor  \undefined \def \bauthor#1{#1}\fi
\ifx \batitle  \undefined \def \batitle#1{#1}\fi
\ifx \bjtitle  \undefined \def \bjtitle#1{#1}\fi
\ifx \bvolume  \undefined \def \bvolume#1{\textbf{#1}}\fi
\ifx \byear  \undefined \def \byear#1{#1}\fi
\ifx \bissue  \undefined \def \bissue#1{#1}\fi
\ifx \bfpage  \undefined \def \bfpage#1{#1}\fi
\ifx \blpage  \undefined \def \blpage #1{#1}\fi
\ifx \burl  \undefined \def \burl#1{\textsf{#1}}\fi
\ifx \doiurl  \undefined \def \doiurl#1{\url{https://doi.org/#1}}\fi
\ifx \betal  \undefined \def \betal{\textit{et al.}}\fi
\ifx \binstitute  \undefined \def \binstitute#1{#1}\fi
\ifx \binstitutionaled  \undefined \def \binstitutionaled#1{#1}\fi
\ifx \bctitle  \undefined \def \bctitle#1{#1}\fi
\ifx \beditor  \undefined \def \beditor#1{#1}\fi
\ifx \bpublisher  \undefined \def \bpublisher#1{#1}\fi
\ifx \bbtitle  \undefined \def \bbtitle#1{#1}\fi
\ifx \bedition  \undefined \def \bedition#1{#1}\fi
\ifx \bseriesno  \undefined \def \bseriesno#1{#1}\fi
\ifx \blocation  \undefined \def \blocation#1{#1}\fi
\ifx \bsertitle  \undefined \def \bsertitle#1{#1}\fi
\ifx \bsnm \undefined \def \bsnm#1{#1}\fi
\ifx \bsuffix \undefined \def \bsuffix#1{#1}\fi
\ifx \bparticle \undefined \def \bparticle#1{#1}\fi
\ifx \barticle \undefined \def \barticle#1{#1}\fi
\bibcommenthead
\ifx \bconfdate \undefined \def \bconfdate #1{#1}\fi
\ifx \botherref \undefined \def \botherref #1{#1}\fi
\ifx \url \undefined \def \url#1{\textsf{#1}}\fi
\ifx \bchapter \undefined \def \bchapter#1{#1}\fi
\ifx \bbook \undefined \def \bbook#1{#1}\fi
\ifx \bcomment \undefined \def \bcomment#1{#1}\fi
\ifx \oauthor \undefined \def \oauthor#1{#1}\fi
\ifx \citeauthoryear \undefined \def \citeauthoryear#1{#1}\fi
\ifx \endbibitem  \undefined \def \endbibitem {}\fi
\ifx \bconflocation  \undefined \def \bconflocation#1{#1}\fi
\ifx \arxivurl  \undefined \def \arxivurl#1{\textsf{#1}}\fi
\csname PreBibitemsHook\endcsname

\bibitem[\protect\citeauthoryear{Kirby et~al.}{2016}]{kirby2016kirchhoff}
\begin{barticle}
\bauthor{\bsnm{Kirby}, \binits{E.C.}},
\bauthor{\bsnm{Mallion}, \binits{R.B.}},
\bauthor{\bsnm{Pollak}, \binits{P.}},
\bauthor{\bsnm{Skrzy{\'n}ski}, \binits{P.J.}}:
\batitle{What kirchhoff actually did concerning spanning trees in electrical networks and its relationship to modern graph-theoretical work}.
\bjtitle{Croatica Chemica Acta}
\bvolume{89}(\bissue{4}),
\bfpage{403}--\blpage{417}
(\byear{2016})
\end{barticle}
\endbibitem

\bibitem[\protect\citeauthoryear{Freitas et~al.}{2020}]{freitas2020stochastic}
\begin{barticle}
\bauthor{\bsnm{Freitas}, \binits{N.}},
\bauthor{\bsnm{Delvenne}, \binits{J.-C.}},
\bauthor{\bsnm{Esposito}, \binits{M.}}:
\batitle{Stochastic and quantum thermodynamics of driven rlc networks}.
\bjtitle{Physical Review X}
\bvolume{10}(\bissue{3}),
\bfpage{031005}
(\byear{2020})
\end{barticle}
\endbibitem

\bibitem[\protect\citeauthoryear{Sokal et~al.}{2005}]{sokal2005multivariate}
\begin{bchapter}
\bauthor{\bsnm{Sokal}, \binits{A.D.}}, \betal:
\bctitle{The multivariate tutte polynomial (alias potts model) for graphs and matroids.}
In: \bbtitle{BCC},
pp. \bfpage{173}--\blpage{226}
(\byear{2005})
\end{bchapter}
\endbibitem

\bibitem[\protect\citeauthoryear{De~Tili{\`e}re}{2013}]{de2013exactly}
\begin{botherref}
\oauthor{\bsnm{De~Tili{\`e}re}, \binits{B.}}:
Exactly solvable models of two-dimensional statistical mechanics: the ising model, dimers and spanning trees.
PhD thesis,
Universit{\'e} Pierre et Marie Curie-Paris VI
(2013)
\end{botherref}
\endbibitem

\bibitem[\protect\citeauthoryear{Caracciolo et~al.}{2004}]{caracciolo2004fermionic}
\begin{barticle}
\bauthor{\bsnm{Caracciolo}, \binits{S.}},
\bauthor{\bsnm{Jacobsen}, \binits{J.L.}},
\bauthor{\bsnm{Saleur}, \binits{H.}},
\bauthor{\bsnm{Sokal}, \binits{A.D.}},
\bauthor{\bsnm{Sportiello}, \binits{A.}}:
\batitle{Fermionic field theory for trees and forests}.
\bjtitle{Physical review letters}
\bvolume{93}(\bissue{8}),
\bfpage{080601}
(\byear{2004})
\end{barticle}
\endbibitem

\bibitem[\protect\citeauthoryear{Gurau et~al.}{2009}]{gurau2009tree}
\begin{bchapter}
\bauthor{\bsnm{Gurau}, \binits{R.}},
\bauthor{\bsnm{Magnen}, \binits{J.}},
\bauthor{\bsnm{Rivasseau}, \binits{V.}}:
\bctitle{Tree quantum field theory}.
In: \bbtitle{Annales Henri Poincare},
vol. \bseriesno{10},
pp. \bfpage{867}--\blpage{891}
(\byear{2009}).
\bcomment{Springer}
\end{bchapter}
\endbibitem

\bibitem[\protect\citeauthoryear{Aluffi and Marcolli}{2011}]{aluffi2011feynman}
\begin{bchapter}
\bauthor{\bsnm{Aluffi}, \binits{P.}},
\bauthor{\bsnm{Marcolli}, \binits{M.}}:
\bctitle{Feynman motives and deletion-contraction relations}.
In: \bbtitle{Topology of Algebraic Varieties and Singularities: Conference in Honor of Anatoly Libgober's 60th Birthday, June 22-26, 2009, Jaca, Huesca, Spain},
vol. \bseriesno{538},
p. \bfpage{19}
(\byear{2011}).
\bcomment{American Mathematical Soc.}
\end{bchapter}
\endbibitem

\bibitem[\protect\citeauthoryear{Polettini}{2015}]{polettini2015cycle}
\begin{barticle}
\bauthor{\bsnm{Polettini}, \binits{M.}}:
\batitle{Cycle/cocycle oblique projections on oriented graphs}.
\bjtitle{Letters in Mathematical Physics}
\bvolume{105},
\bfpage{89}--\blpage{107}
(\byear{2015})
\end{barticle}
\endbibitem

\bibitem[\protect\citeauthoryear{Carugno et~al.}{2022}]{carugno2022graph}
\begin{barticle}
\bauthor{\bsnm{Carugno}, \binits{G.}},
\bauthor{\bsnm{Vivo}, \binits{P.}},
\bauthor{\bsnm{Coghi}, \binits{F.}}:
\batitle{Graph-combinatorial approach for large deviations of markov chains}.
\bjtitle{Journal of Physics A: Mathematical and Theoretical}
\bvolume{55}(\bissue{29}),
\bfpage{295001}
(\byear{2022})
\end{barticle}
\endbibitem

\bibitem[\protect\citeauthoryear{Polettini}{2015}]{polettini2015best}
\begin{barticle}
\bauthor{\bsnm{Polettini}, \binits{M.}}:
\batitle{Best statistics of markovian fluxes: a tale of eulerian tours and fermionic ghosts}.
\bjtitle{Journal of Physics A: Mathematical and Theoretical}
\bvolume{48}(\bissue{36}),
\bfpage{365005}
(\byear{2015})
\end{barticle}
\endbibitem

\bibitem[\protect\citeauthoryear{Hill}{1966}]{hill1966studies}
\begin{barticle}
\bauthor{\bsnm{Hill}, \binits{T.L.}}:
\batitle{Studies in irreversible thermodynamics iv. diagrammatic representation of steady state fluxes for unimolecular systems}.
\bjtitle{Journal of theoretical biology}
\bvolume{10}(\bissue{3}),
\bfpage{442}--\blpage{459}
(\byear{1966})
\end{barticle}
\endbibitem

\bibitem[\protect\citeauthoryear{Schnakenberg}{1976}]{schnakenberg1976network}
\begin{barticle}
\bauthor{\bsnm{Schnakenberg}, \binits{J.}}:
\batitle{Network theory of microscopic and macroscopic behavior of master equation systems}.
\bjtitle{Reviews of Modern physics}
\bvolume{48}(\bissue{4}),
\bfpage{571}
(\byear{1976})
\end{barticle}
\endbibitem

\bibitem[\protect\citeauthoryear{Caplan and Zeilberger}{1982}]{caplan1982tl}
\begin{barticle}
\bauthor{\bsnm{Caplan}, \binits{S.R.}},
\bauthor{\bsnm{Zeilberger}, \binits{D.}}:
\batitle{Tl hill's graphical method for solving linear equations}.
\bjtitle{Advances in Applied Mathematics}
\bvolume{3}(\bissue{4}),
\bfpage{377}--\blpage{383}
(\byear{1982})
\end{barticle}
\endbibitem

\bibitem[\protect\citeauthoryear{King and Altman}{1956}]{king1956schematic}
\begin{barticle}
\bauthor{\bsnm{King}, \binits{E.L.}},
\bauthor{\bsnm{Altman}, \binits{C.}}:
\batitle{A schematic method of deriving the rate laws for enzyme-catalyzed reactions}.
\bjtitle{The Journal of physical chemistry}
\bvolume{60}(\bissue{10}),
\bfpage{1375}--\blpage{1378}
(\byear{1956})
\end{barticle}
\endbibitem

\bibitem[\protect\citeauthoryear{Khodabandehlou et~al.}{2022}]{khodabandehlou2022trees}
\begin{barticle}
\bauthor{\bsnm{Khodabandehlou}, \binits{F.}},
\bauthor{\bsnm{Maes}, \binits{C.}},
\bauthor{\bsnm{Neto{\v{c}}n{\`y}}, \binits{K.}}:
\batitle{Trees and forests for nonequilibrium purposes: an introduction to graphical representations}.
\bjtitle{Journal of Statistical Physics}
\bvolume{189}(\bissue{3}),
\bfpage{41}
(\byear{2022})
\end{barticle}
\endbibitem

\bibitem[\protect\citeauthoryear{Avanzini et~al.}{2023}]{lecturenotes}
\begin{botherref}
\oauthor{\bsnm{Avanzini}, \binits{F.}},
\oauthor{\bsnm{Bilancioni}, \binits{M.}},
\oauthor{\bsnm{Cavina}, \binits{V.}},
\oauthor{\bsnm{Cengio~Dal}, \binits{S.}},
\oauthor{\bsnm{Esposito}, \binits{M.}},
\oauthor{\bsnm{Falasco}, \binits{G.}},
\oauthor{\bsnm{Forastiere}, \binits{D.}},
\oauthor{\bsnm{Freitas}, \binits{N.}},
\oauthor{\bsnm{Garilli}, \binits{A.}},
\oauthor{\bsnm{Harunari}, \binits{P.E.}}, et al.:
Methods and conversations in (post) modern thermodynamics.
arXiv preprint arXiv:2311.01250
(2023)
\end{botherref}
\endbibitem

\bibitem[\protect\citeauthoryear{Martins and Horowitz}{2023}]{martins2023topologically}
\begin{botherref}
\oauthor{\bsnm{Martins}, \binits{G.F.}},
\oauthor{\bsnm{Horowitz}, \binits{J.M.}}:
Topologically-constrained fluctuations and thermodynamics regulate nonequilibrium response.
arXiv preprint arXiv:2305.19348
(2023)
\end{botherref}
\endbibitem

\bibitem[\protect\citeauthoryear{Khodabandehlou et~al.}{2022}]{khodabandehlou2022exact}
\begin{barticle}
\bauthor{\bsnm{Khodabandehlou}, \binits{F.}},
\bauthor{\bsnm{Krekels}, \binits{S.}},
\bauthor{\bsnm{Maes}, \binits{I.}}:
\batitle{Exact computation of heat capacities for active particles on a graph}.
\bjtitle{Journal of Statistical Mechanics: Theory and Experiment}
\bvolume{2022}(\bissue{12}),
\bfpage{123208}
(\byear{2022})
\end{barticle}
\endbibitem

\bibitem[\protect\citeauthoryear{Garilli et~al.}{2023}]{garilli2023fluctuation}
\begin{botherref}
\oauthor{\bsnm{Garilli}, \binits{A.}},
\oauthor{\bsnm{Harunari}, \binits{P.E.}},
\oauthor{\bsnm{Polettini}, \binits{M.}}:
Fluctuation relations for a few observable currents at their own beat
(2023)
\end{botherref}
\endbibitem

\bibitem[\protect\citeauthoryear{Liang et~al.}{2023}]{liang2023universal}
\begin{botherref}
\oauthor{\bsnm{Liang}, \binits{S.}},
\oauthor{\bsnm{Rios}, \binits{P.D.L.}},
\oauthor{\bsnm{Busiello}, \binits{D.M.}}:
Universal thermodynamic bounds on symmetry breaking in biochemical systems
(2023)
\end{botherref}
\endbibitem

\bibitem[\protect\citeauthoryear{Floyd et~al.}{2024}]{floydLearningControlNonequilibrium2024a}
\begin{botherref}
\oauthor{\bsnm{Floyd}, \binits{C.}},
\oauthor{\bsnm{Dinner}, \binits{A.R.}},
\oauthor{\bsnm{Vaikuntanathan}, \binits{S.}}:
Learning to Control Non-Equilibrium Dynamics Using Local Imperfect Gradients.
arXiv
(2024)
\end{botherref}
\endbibitem

\bibitem[\protect\citeauthoryear{Chun and Horowitz}{2023}]{chunTradeoffsNumberFluctuations2023}
\begin{barticle}
\bauthor{\bsnm{Chun}, \binits{H.-M.}},
\bauthor{\bsnm{Horowitz}, \binits{J.M.}}:
\batitle{Trade-offs between number fluctuations and response in nonequilibrium chemical reaction networks}.
\bjtitle{The Journal of Chemical Physics}
\bvolume{158}(\bissue{17}),
\bfpage{174115}
(\byear{2023})
\doiurl{10.1063/5.0148662}
\end{barticle}
\endbibitem

\bibitem[\protect\citeauthoryear{Maes and Neto{\v c}n{\'y}}{2013}]{maesHeatBoundsBlowtorch2013}
\begin{barticle}
\bauthor{\bsnm{Maes}, \binits{C.}},
\bauthor{\bsnm{Neto{\v c}n{\'y}}, \binits{K.}}:
\batitle{Heat {{Bounds}} and the {{Blowtorch Theorem}}}.
\bjtitle{Annales Henri Poincar{\'e}}
\bvolume{14}(\bissue{5}),
\bfpage{1193}--\blpage{1202}
(\byear{2013})
\doiurl{10.1007/s00023-012-0214-8}
\end{barticle}
\endbibitem

\bibitem[\protect\citeauthoryear{Brylawski and Oxley}{1992}]{brylawski1992tutte}
\begin{barticle}
\bauthor{\bsnm{Brylawski}, \binits{T.}},
\bauthor{\bsnm{Oxley}, \binits{J.}}:
\batitle{The tutte polynomial and its applications}.
\bjtitle{Matroid applications}
\bvolume{40},
\bfpage{123}--\blpage{225}
(\byear{1992})
\end{barticle}
\endbibitem

\bibitem[\protect\citeauthoryear{Bollob{\'a}s et~al.}{2000}]{bollobas2000contraction}
\begin{barticle}
\bauthor{\bsnm{Bollob{\'a}s}, \binits{B.}},
\bauthor{\bsnm{Pebody}, \binits{L.}},
\bauthor{\bsnm{Riordan}, \binits{O.}}:
\batitle{Contraction--deletion invariants for graphs}.
\bjtitle{Journal of Combinatorial Theory, Series B}
\bvolume{80}(\bissue{2}),
\bfpage{320}--\blpage{345}
(\byear{2000})
\end{barticle}
\endbibitem

\bibitem[\protect\citeauthoryear{Cinkir}{2011}]{cinkir2011deletion}
\begin{barticle}
\bauthor{\bsnm{Cinkir}, \binits{Z.}}:
\batitle{Deletion and contraction identities for the resistance values and the kirchhoff index}.
\bjtitle{International Journal of Quantum Chemistry}
\bvolume{111}(\bissue{15}),
\bfpage{4030}--\blpage{4041}
(\byear{2011})
\end{barticle}
\endbibitem

\bibitem[\protect\citeauthoryear{Freitas et~al.}{2021}]{freitas2021stochastic}
\begin{barticle}
\bauthor{\bsnm{Freitas}, \binits{N.}},
\bauthor{\bsnm{Delvenne}, \binits{J.-C.}},
\bauthor{\bsnm{Esposito}, \binits{M.}}:
\batitle{Stochastic thermodynamics of nonlinear electronic circuits: A realistic framework for computing around k t}.
\bjtitle{Physical Review X}
\bvolume{11}(\bissue{3}),
\bfpage{031064}
(\byear{2021})
\end{barticle}
\endbibitem

\bibitem[\protect\citeauthoryear{Owen et~al.}{2020}]{owen2020universal}
\begin{barticle}
\bauthor{\bsnm{Owen}, \binits{J.A.}},
\bauthor{\bsnm{Gingrich}, \binits{T.R.}},
\bauthor{\bsnm{Horowitz}, \binits{J.M.}}:
\batitle{Universal thermodynamic bounds on nonequilibrium response with biochemical applications}.
\bjtitle{Physical Review X}
\bvolume{10}(\bissue{1}),
\bfpage{011066}
(\byear{2020})
\end{barticle}
\endbibitem

\bibitem[\protect\citeauthoryear{Harunari et~al.}{2024}]{harunari2024mutual}
\begin{botherref}
\oauthor{\bsnm{Harunari}, \binits{P.E.}},
\oauthor{\bsnm{Dal~Cengio}, \binits{S.}},
\oauthor{\bsnm{Lecomte}, \binits{V.}},
\oauthor{\bsnm{Polettini}, \binits{M.}}:
Mutual linearity of nonequilibrium network currents.
arXiv preprint arXiv:2402.13193
(2024)
\end{botherref}
\endbibitem

\bibitem[\protect\citeauthoryear{Tutte}{2004}]{tutte2004graph}
\begin{barticle}
\bauthor{\bsnm{Tutte}, \binits{W.}}:
\batitle{Graph-polynomials}.
\bjtitle{Advances in Applied Mathematics}
\bvolume{32}(\bissue{1-2}),
\bfpage{5}--\blpage{9}
(\byear{2004})
\end{barticle}
\endbibitem

\bibitem[\protect\citeauthoryear{Polettini and Esposito}{2017}]{polettini2017effective}
\begin{barticle}
\bauthor{\bsnm{Polettini}, \binits{M.}},
\bauthor{\bsnm{Esposito}, \binits{M.}}:
\batitle{Effective thermodynamics for a marginal observer}.
\bjtitle{Physical review letters}
\bvolume{119}(\bissue{24}),
\bfpage{240601}
(\byear{2017})
\end{barticle}
\endbibitem

\bibitem[\protect\citeauthoryear{Awan and Bernardi}{2020}]{awan2020tutte}
\begin{barticle}
\bauthor{\bsnm{Awan}, \binits{J.}},
\bauthor{\bsnm{Bernardi}, \binits{O.}}:
\batitle{Tutte polynomials for directed graphs}.
\bjtitle{Journal of Combinatorial Theory, Series B}
\bvolume{140},
\bfpage{192}--\blpage{247}
(\byear{2020})
\end{barticle}
\endbibitem

\bibitem[\protect\citeauthoryear{Kauffman}{1989}]{kauffman1989tutte}
\begin{barticle}
\bauthor{\bsnm{Kauffman}, \binits{L.H.}}:
\batitle{A tutte polynomial for signed graphs}.
\bjtitle{Discrete Applied Mathematics}
\bvolume{25}(\bissue{1-2}),
\bfpage{105}--\blpage{127}
(\byear{1989})
\end{barticle}
\endbibitem

\bibitem[\protect\citeauthoryear{Aslyamov and Esposito}{2023}]{aslyamov2023nonequilibrium}
\begin{botherref}
\oauthor{\bsnm{Aslyamov}, \binits{T.}},
\oauthor{\bsnm{Esposito}, \binits{M.}}:
Nonequilibrium response for markov jump processes: Exact results and tight bounds.
arXiv preprint arXiv:2308.04260
(2023)
\end{botherref}
\endbibitem

\bibitem[\protect\citeauthoryear{Bertoldi et~al.}{2000}]{bertoldi2000equivalence}
\begin{barticle}
\bauthor{\bsnm{Bertoldi}, \binits{G.}},
\bauthor{\bsnm{Faraggi}, \binits{A.E.}},
\bauthor{\bsnm{Matone}, \binits{M.}}:
\batitle{Equivalence principle, higher-dimensional m{\"o}bius group and the hidden antisymmetric tensor of quantum mechanics}.
\bjtitle{Classical and Quantum Gravity}
\bvolume{17}(\bissue{19}),
\bfpage{3965}
(\byear{2000})
\end{barticle}
\endbibitem

\end{thebibliography}

\end{document}